\documentclass{amsart}

\usepackage{amsmath,amssymb,amsfonts}
\usepackage{ifthen}
\usepackage{graphicx}
\usepackage{float}
\usepackage[colorlinks=true]{hyperref}
\hypersetup{urlcolor=blue, citecolor=red}

  \textheight=8.2 true in
   \textwidth=5.0 true in
    \topmargin 30pt
     \setcounter{page}{1}

\newtheorem{theorem}{Theorem}[section]
\newtheorem{lemma}[theorem]{Lemma}
\theoremstyle{definition}
\newtheorem{definition}[theorem]{Definition}
\newtheorem{remark}{Remark}

\title[Local Uniqueness of the Circular Integral Invariant]
      {Local Uniqueness of the Circular Integral Invariant}
      
\author[Martin Bauer, Thomas Fidler and Markus Grasmair]{}

\subjclass{Primary: 46T10 ; Secondary: 46N20.}
\keywords{Integral invariants, curves, curve descriptors.}

\email{Bauer.Martin@univie.ac.at}
\email{Thomas.Fidler@univie.ac.at}
\email{Markus.Grasmair@univie.ac.at}

\thanks{The first author is supported by the Austrian Science Fund (FWF) project P 21030-N13.}

\DeclareMathOperator{\comp}{Comp}
\DeclareMathOperator{\diff}{Diff}
\DeclareMathOperator{\kernel}{Ker}

\DeclareMathOperator{\inn}{Int}
\DeclareMathOperator{\area}{area}
\DeclareMathOperator{\Emb}{Emb}
\DeclareMathOperator{\SE}{SE}
\DeclareMathOperator{\id}{Id}

\begin{document}
\maketitle

\centerline{\scshape Martin Bauer}
\medskip
{\footnotesize
 \centerline{Department of Mathematics, University of Vienna}
 \centerline{Nordbergstr.~15, A-1090 Wien, Austria}
} 

\medskip

\centerline{\scshape Thomas Fidler}
\medskip
{\footnotesize
 \centerline{Computational Science Center, University of Vienna}
 \centerline{Nordbergstr.~15, A-1090 Wien, Austria}
}

\medskip

\centerline{\scshape Markus Grasmair}
\medskip
{\footnotesize
 \centerline{Computational Science Center, University of Vienna}
 \centerline{Nordbergstr.~15, A-1090 Wien, Austria}
}

\bigskip

\begin{abstract}
  This article is concerned with the representation of curves by means of integral invariants. In contrast to
  the classical differential invariants they have the advantage of being less sensitive with respect to noise.
  The integral invariant most common in use is the circular integral invariant. A major drawback of this curve
  descriptor, however, is the absence of any uniqueness result for this representation. This article serves as
  a contribution towards closing this gap by showing that the circular integral invariant is
  injective in a neighbourhood of the circle. In addition, we provide a stability estimate valid on this neighbourhood.
  The proof is an application of Riesz--Schauder theory and the
  implicit function theorem in a Banach space setting.
\end{abstract}

\section{Introduction}

In many applications one faces the challenge to model objects, or parts of objects, in a mathematical framework.
As an example, one important task is to extract an object from a given data set and manipulate it in a
post-processing step in order to obtain further information. Typical applications include medical imaging, object
tracking in a sequence of images, but also object recognition, where the post-processing step consists of the
comparison of the extracted object with a database of reference objects. Similarly, such a comparison can be
necessary in medical imaging in order to distinguish between healthy and diseased organs. To that end, however,
one has to be able to decide whether two given objects are similar or not. This requires a representation of
the objects that makes the application of standard similarity measures possible.

Finding a suitable representation of the object of interest, depending on the type of application, is crucial
as a first step. For simplicity, it is often assumed that the object is a simply connected bounded domain,
allowing for the identification of the domain with its boundary. From a mathematical point of view, this
assumption reduces the complexity of the representation. In addition, there exists a larger number of
descriptions of boundaries than of domains, and, consequently, more mathematical tools to analyze the geometry
of the underlying objects.
\bigskip

In 2D a common approach is to encode the contour of an object by the curvature function of its boundary curve.
This approach has, for instance, been used in \cite{KlaSriMioJos04}, where the authors set up a shape space of
planar curves, where the shapes are implicitly encoded by the curvature function. The main advantage of using
a differential invariant --- the most prominent representative being the curvature --- to represent an object
is the well investigated mathematical framework of this type of invariants (see \cite{Car35,Lie84,Olv95}).
\smallskip

Since all kinds of differential invariants are based on derivatives, they suffer from the shortcoming of being
sensitive with respect to small perturbations. To bypass this shortcoming Manay et al.~\cite{ManHonYezSoa04}
proposed to use integral invariants instead of their differential counterparts
(see also \cite{FidGraSch08,HuaFloGelHofPot06,YanLaiHuPot06}). Integral invariants have similar invariance
properties as differential invariants, but have proven to be considerably more robust with respect to noise.
Their theory, however, is not that well investigated as opposed to the theory of their differential counterparts.
\smallskip

Beside the classical approach of differential invariants and the novel approach of integral invariants, there
exist several other concepts for encoding an object. For instance, in \cite{DucYezMitSoa03} the authors use
the zero level set of a harmonic function, which is uniquely determined by prescribing two functions on the
boundary of an annulus, to encode the boundary of a 2D object (see \cite{DucYezSoaRoc06} for a generalization
to compact surfaces in 3D). A similar encoding of the object by a function is given in the article of Sharon
and Mumford \cite{ShaMum06}. Here, the authors first map the 2D object, which is supposed to be a smooth and
simply closed curve, to the interior of the unit disc in the complex plane via the Riemann mapping theorem.
This conformal mapping is composed with a second one, generated out of the exterior of the original object,
and the composition is restricted to the boundary of the unit disc. Thus, the final mapping, which the authors
call the fingerprint of the object, is a diffeomorphism from the unit circle onto itself.
\bigskip

One of the challenges in object encoding is the question of uniqueness of the encoding. More precisely, in many
applications, e.g.~object matching, the correspondence between the object and its encoding should be one-to-one.
Thus, a thorough investigation of the operator that maps an object to its encoding is needed. In case of the
encoding by a harmonic or conformal mapping --- if possible --- uniqueness is well known. Also for the encoding
of an arc length parameterized curve by its curvature function, it is known that one obtains a one-to-one
correspondence between the curve and its encoding (up to rigid body motions). One even has a complete
characterization of the set of functions that arise as curvature functions of a class of sufficiently regular
curves (see \cite{Dah05}). For integral invariants the situation is different; the cone area invariant, first
introduced in \cite{FidGraSch08}, is an injective mapping independent of the space dimension, but its application
is limited to star-shaped objects. In contrast, for the circular integral invariant, which is the integral
invariant most common in use, there exists no proof for the uniqueness conjecture so far.
\smallskip

This article is a contribution towards this goal: We first prove that the integral invariant
is $\ell$-times continuously Fr\'echet differentiable in a neighborhood of the circle, seen as a mapping from
$C^{k+\ell+1}$ to $C^k$, $k \ge 0$. 
Then we show that the Fr\'echet differential is injective on some $C^{k+\ell+1}$-neighborhood of the circle,
$k \ge 1$, $\ell \ge 1$.
The proofs of these results are based on the implicit
function theorem on Banach spaces and an application of Riesz--Schauder theory.
Using the injectivity result, a Taylor series expansion, and an interpolation inequality
for $C^m$ norms, we obtain the injectivity of the integral invariant
on a $C^{k+6}$ neighborhood $\mathcal{V}$, $k \ge 1$.
More precisely, we show that, in $\mathcal{V}$,
the $C^k$-norm of the difference of the integral invariants of two curves
can be estimated from below by their $C^k$-distance.

\section{Setting}
Let $\Emb$ be the space of all continuous embeddings from $S^1$ to $\mathbb{R}^2$.
Then every curve $\gamma\in\Emb$ has a unique interior, denoted by $\inn(\gamma)$.
Following \cite{FidGraSch08,ManHonYezSoa04}, this allows us to introduce the circular integral invariant:

\begin{definition}
  For given $r>0$ we define the \emph{circular integral invariant}
  \begin{equation*}
    I_{r}[\gamma] \colon S^{1} \to \mathbb{R}
  \end{equation*}
  of a curve $\gamma\in\Emb$ as
  \begin{equation*}
    I_r[\gamma](\varphi) := \area\bigl(B_r\bigl(\gamma(\varphi)\bigr)\cap\inn(\gamma)\bigr)\,,
  \end{equation*}
  where $B_r(p)$ denotes the ball of radius $r$ centered at $p \in \mathbb{R}^{2}$.
\end{definition}

The circular integral invariant behaves well under several group actions:
\begin{itemize}
\item $I_r$ is invariant with respect to Euclidean motions: For $A\in \SE[2]$ we have
  \begin{equation*}
    I_r[A\circ \gamma] = I_r[\gamma] \;.
  \end{equation*}
\item $I_r$ is equivariant with respect to reparametrizations: For every homeomorphism
  $\Phi\colon S^1 \to S^1$ we have
  \begin{equation*}
    I_r[\gamma\circ\Phi] = I_r[\gamma]\circ\Phi \;.
  \end{equation*}
\item For every scalar $t>0$ we have
  \begin{equation*}
    I_r[t\gamma] = t^2 I_{r/t}[\gamma] \;.
  \end{equation*}
\end{itemize}

The observations above suggest to consider the integral invariant on the space $\mathcal{C}$ of all curves
modulo Euclidean motions and reparametrizations. Moreover, we assume as an additional smoothness property
that the considered curves are of class $C^{k}$, $k \geq 1$. Then it makes sense to use the following representation of
$\mathcal{C}$, as it avoids working with equivalence classes of curves.

\begin{definition}
  Denote by $\mathcal{C}_{k} \subset \Emb$, $k \geq 1$, the space of all curves $\gamma \in C^{k}(S^1;\mathbb{R}^2)$
  satisfying the following conditions:
  \begin{itemize}
  \item $\gamma$ has constant speed, i.e., there exists a constant $c_{\gamma} > 0$ such that
    $\lVert \dot\gamma(\varphi) \rVert = c_{\gamma}$ for all $\varphi \in S^1$.
  \item $\gamma(0) = (1,0)$ and $\dot\gamma(0) = (0,c_{\gamma})$, where we identify the circle
    $S^1$ with the interval $[0,2\pi)$.
  \item $\gamma$ is an embedding, that is, $\gamma(\varphi) \neq \gamma(\psi)$ for all $\varphi\neq\psi$.
  \end{itemize}
\end{definition}

For the proof of our main theorem we need the following result from differential geometry concerning the
manifold structure of $\mathcal{C}_{k}$:

\begin{theorem} \label{thm:smooth_submanifold}
  For $k \geq 1$ the space $\mathcal{C}_{k}$ is a smooth submanifold of the Banach space of all $C^k$-curves from $S^1$ to
  $\mathbb{R}^2$. Its tangent space $T_{\gamma} \mathcal{C}_{k}$ at a curve $\gamma\in\mathcal{C}_{k}$ consists
  of all $C^k$-curves $\sigma$ with
  \begin{equation*}
    \langle\dot\sigma(\varphi),\dot\gamma(\varphi)\rangle=c \text{ for some }
    c \in \mathbb{R},\quad \sigma(0) = (0,0) \text{ and } \langle \dot \sigma(0),\gamma(0) \rangle = 0 \;.
  \end{equation*}
\end{theorem}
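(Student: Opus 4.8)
The plan is to realize $\mathcal{C}_k$ locally as the zero set of a smooth submersion and to apply the submersion theorem (implicit function theorem) on Banach spaces. Put $E := C^k(S^1;\mathbb{R}^2)$. Since the set of embeddings is open in $E$, it is enough to treat the set $\widetilde{\mathcal{C}}_k$ obtained from the definition of $\mathcal{C}_k$ by dropping the embedding requirement; $\mathcal{C}_k$ is then an open subset of $\widetilde{\mathcal{C}}_k$ and inherits its submanifold structure and tangent spaces. Let $\mathcal{O} := \{\gamma\in E : \dot\gamma(0)\neq 0,\ \langle\dot\gamma(0),(0,1)\rangle > 0\}$, which is open in $E$, and let $Y := \{f\in C^{k-1}(S^1;\mathbb{R}) : f(0) = 0\}$, a closed complemented subspace of $C^{k-1}(S^1;\mathbb{R})$. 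Define $F = (F_1,F_2,F_3)\colon\mathcal{O}\to Y\times\mathbb{R}^2\times\mathbb{R}$ by
\begin{align*}
  F_1(\gamma)(\varphi) &:= \lVert\dot\gamma(\varphi)\rVert^2 - \lVert\dot\gamma(0)\rVert^2, \\
  F_2(\gamma) &:= \gamma(0) - (1,0), \\
  F_3(\gamma) &:= \langle\dot\gamma(0),(1,0)\rangle .
\end{align*}
Then $F^{-1}(0) = \widetilde{\mathcal{C}}_k$: on $\mathcal{O}$, the vanishing of $F_1$ forces $\lVert\dot\gamma\rVert\equiv\lVert\dot\gamma(0)\rVert =: c_\gamma > 0$, the vanishing of $F_2$ fixes $\gamma(0) = (1,0)$, and $F_3(\gamma) = 0$ together with $\langle\dot\gamma(0),(0,1)\rangle > 0$ forces $\dot\gamma(0) = (0,c_\gamma)$.

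The map $F$ is smooth: $\gamma\mapsto\dot\gamma$ is bounded linear from $E$ to $C^{k-1}(S^1;\mathbb{R}^2)$, the pointwise map $v\mapsto\lVert v\rVert^2$ is the restriction to the diagonal of a bounded bilinear map on $C^{k-1}(S^1;\mathbb{R}^2)$ (using that $C^{k-1}(S^1;\mathbb{R})$ is a Banach algebra) and hence polynomial, in particular smooth, while $F_2$ and $F_3$ are affine. Differentiating at $\gamma_0\in\widetilde{\mathcal{C}}_k$ gives, for $\sigma\in E$,
\begin{align*}
  dF_{\gamma_0}(\sigma) &= \bigl(g_\sigma,\ \sigma(0),\ \langle\dot\sigma(0),(1,0)\rangle\bigr), \\
  g_\sigma(\varphi) &:= 2\langle\dot\gamma_0(\varphi),\dot\sigma(\varphi)\rangle - 2\langle\dot\gamma_0(0),\dot\sigma(0)\rangle .
\end{align*}
Hence $\sigma\in\kernel dF_{\gamma_0}$ if and only if $\langle\dot\gamma_0,\dot\sigma\rangle$ is constant, $\sigma(0) = 0$, and $\langle\dot\sigma(0),(1,0)\rangle = \langle\dot\sigma(0),\gamma_0(0)\rangle = 0$ (using $\gamma_0(0) = (1,0)$), which is exactly the space described in the theorem.

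It remains to show that $dF_{\gamma_0}$ is surjective with complemented kernel; this is the heart of the matter. Write $c_0 := c_{\gamma_0}$, let $R$ denote rotation by $\pi/2$, and set $n_0 := R\dot\gamma_0$, a nowhere-vanishing $C^{k-1}$ vector field with $\langle\dot\gamma_0,n_0\rangle\equiv 0$ and $\lVert n_0\rVert\equiv c_0$. Given $(h,a,b)\in Y\times\mathbb{R}^2\times\mathbb{R}$, look for $\sigma$ with
\[
  \dot\sigma = \frac{1}{2c_0^2}\,h\,\dot\gamma_0 + \mu\,n_0 ,
\]
where $\mu\in C^{k-1}(S^1;\mathbb{R})$ is still free; then $\langle\dot\gamma_0,\dot\sigma\rangle = \tfrac{1}{2} h$ and $\langle\dot\gamma_0(0),\dot\sigma(0)\rangle = \tfrac{1}{2} h(0) = 0$, so $g_\sigma = h$ for every choice of $\mu$. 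One now chooses $\mu$ so that $\int_0^{2\pi}\dot\sigma = 0$ (so that $\sigma$ is a well-defined curve on $S^1$) and $\mu(0) = -b/c_0$ (which yields $\langle\dot\sigma(0),(1,0)\rangle = b$), and finally fixes the constant of integration so that $\sigma(0) = a$. The conditions on $\mu$ are three linear constraints, namely $\mu(0)$ prescribed and $\int_0^{2\pi}\mu\,n_0 = w$ for a prescribed $w\in\mathbb{R}^2$; the three associated functionals on $C^{k-1}(S^1;\mathbb{R})$ are linearly independent, because $\mu\mapsto\mu(0)$ is not absolutely continuous while $\mu\mapsto\int\mu(n_0)_i$ are, and $(n_0)_1,(n_0)_2$ are linearly independent functions --- otherwise $n_0$, having constant norm and lying in a fixed line, would be a nonzero constant vector, making $\gamma_0$ affine in $\varphi$ and hence not $2\pi$-periodic. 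Picking three fixed $C^{k-1}$ functions biorthogonal to these functionals turns the assignment $(h,a,b)\mapsto\sigma$ into a bounded linear right inverse of $dF_{\gamma_0}$, so $dF_{\gamma_0}$ is onto and $\kernel dF_{\gamma_0}$ is complemented. The submersion theorem then yields that $F^{-1}(0) = \widetilde{\mathcal{C}}_k$ is a smooth submanifold of $E$ near $\gamma_0$ with tangent space $\kernel dF_{\gamma_0}$, which finishes the proof.

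I expect the only genuinely delicate step to be this surjectivity argument: meeting the two pointwise constraints at $\varphi = 0$ and the periodicity constraint $\int_0^{2\pi}\dot\sigma = 0$ simultaneously, and noticing that linear independence of the components of $\dot\gamma_0$ --- needed for any right inverse to exist --- is automatic for a \emph{closed} curve of constant speed. The remaining ingredients, namely the smoothness of $F_1$ as a composition of a bounded linear map and a bounded bilinear map, and the bookkeeping that makes the right inverse bounded and linear, are routine.
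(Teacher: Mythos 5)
Your proposal is correct, and it is essentially a fully worked-out version of the argument the paper only sketches: the paper's proof consists of a reference to \cite{Pre11} for the submanifold structure together with the formal differentiation of the constant-speed constraint, whereas you actually exhibit the submersion $F=(F_1,F_2,F_3)$ and verify its hypotheses. The choices you make are the right ones --- restricting the codomain of $F_1$ to $Y=\{f\in C^{k-1}: f(0)=0\}$ (without which surjectivity fails, since $F_1(\gamma)(0)\equiv 0$), and handling the embedding condition by openness so that only the constraint set matters. The one genuinely nontrivial step you supply that the paper omits entirely is the construction of a bounded linear right inverse of $dF_{\gamma_0}$: the ansatz $\dot\sigma=\tfrac{1}{2c_0^2}h\,\dot\gamma_0+\mu\,n_0$ decouples the speed constraint from the remaining three scalar conditions on $\mu$ (the periodicity condition $\int_0^{2\pi}\dot\sigma=0$ and the value $\mu(0)$), and your observation that the corresponding three functionals are independent precisely because a closed constant-speed curve cannot have a normal field confined to a line is the correct and necessary justification; a biorthogonal system then gives both surjectivity and the complemented kernel. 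The resulting kernel matches the tangent space stated in the theorem, so the proof is complete and consistent with the paper's claim.
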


\begin{proof}
  The proof of the submanifold result is similar to \cite[Thm.~2.2]{Pre11}. In our case the situation is less
  complicated, as we only deal with $C^{k}$-curves instead of Sobolev curves. The constant speed parameterization
  yields the condition
  \begin{equation*}
    2c
    =
    \partial_{\varepsilon}|_0
    \langle
    \dot{\gamma}(\varphi) + \varepsilon \dot{\sigma}(\varphi),\dot{\gamma}(\varphi) + \varepsilon \dot{\sigma}(\varphi)
    \rangle
    =
    2 \langle \dot\sigma(\varphi) ,\dot\gamma(\varphi)\rangle \;.
  \end{equation*}
  The remaining constraints follow directly from the initial conditions.
\end{proof}

Under additional smoothness assumptions on $\gamma$
we obtain the following characterization of the tangent space $T_{\gamma} \mathcal{C}_{k}$.

\begin{lemma} \label{lem:tangent_space_characterization}
  Let $k \ge 1$ and $\gamma \in \mathcal{C}_k \cap C^{k+1}(S^1;\mathbb{R}^2)$ with curvature function
  \begin{equation*}
    \kappa_{\gamma}(\varphi) := \frac{\langle \dot{\gamma}(\varphi)^{\bot},\ddot{\gamma}(\varphi) \rangle}{c_{\gamma}^{3}}.
  \end{equation*}
  Then the tangent space $T_\gamma\mathcal{C}_k$ consists of all $C^k$-curves 
  $\sigma(\varphi) = a(\varphi)\dot\gamma(\varphi)^{\bot}+b(\varphi)\dot\gamma(\varphi)$
  satisfying:
  \begin{itemize}
  \item $\dot{b}(\varphi) = \dot{b}(0) + a(\varphi)\kappa_{\gamma}(\varphi)c_{\gamma}$.
  \item $a(0) = b(0) = 0$.
  \item $\dot a(0) = 0$.
  \end{itemize}
\end{lemma}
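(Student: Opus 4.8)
The plan is to start from the description of $T_\gamma\mathcal{C}_k$ given in Theorem~\ref{thm:smooth_submanifold} and simply rewrite it in the moving frame $\{\dot\gamma^\bot,\dot\gamma\}$. Since $\gamma$ has constant speed $c_\gamma$, the two vector fields $\dot\gamma(\varphi)$ and $\dot\gamma(\varphi)^\bot$ are orthogonal and of constant length $c_\gamma$, hence form a (non-normalized) orthogonal frame of $\mathbb{R}^2$ at every point. Therefore any $C^k$-curve $\sigma$ can be written uniquely as $\sigma(\varphi)=a(\varphi)\dot\gamma(\varphi)^\bot+b(\varphi)\dot\gamma(\varphi)$ with $a,b\in C^k(S^1;\mathbb{R})$; note this really does lose a derivative in principle, but the extra assumption $\gamma\in C^{k+1}$ guarantees $\dot\gamma\in C^k$, so $a$ and $b$ are genuinely $C^k$. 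The task is then to translate each of the three conditions in Theorem~\ref{thm:smooth_submanifold} into conditions on $a$ and $b$.

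First I would differentiate $\sigma=a\dot\gamma^\bot+b\dot\gamma$, using $\ddot\gamma = (\kappa_\gamma c_\gamma^2)\,\dot\gamma^\bot$ — this is the Frenet-type identity that follows from the constant-speed condition, since $\langle\dot\gamma,\ddot\gamma\rangle=0$ forces $\ddot\gamma$ to be a multiple of $\dot\gamma^\bot$, and the definition of $\kappa_\gamma$ identifies the multiple as $\kappa_\gamma c_\gamma^2$ (one should double-check the sign and the power of $c_\gamma$ against the stated formula $\kappa_\gamma = \langle\dot\gamma^\bot,\ddot\gamma\rangle/c_\gamma^3$, and also record $(\dot\gamma^\bot)^{\displaystyle\cdot} = \ddot\gamma^\bot = -(\kappa_\gamma c_\gamma^2)\dot\gamma$). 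This gives
\begin{equation*}
  \dot\sigma = \dot a\,\dot\gamma^\bot + a\,(\ddot\gamma^\bot) + \dot b\,\dot\gamma + b\,\ddot\gamma
  = \bigl(\dot a + b\kappa_\gamma c_\gamma^2\bigr)\dot\gamma^\bot + \bigl(\dot b - a\kappa_\gamma c_\gamma^2\bigr)\dot\gamma \,.
\end{equation*}
Taking the inner product with $\dot\gamma$ and using $\langle\dot\gamma,\dot\gamma\rangle=c_\gamma^2$, $\langle\dot\gamma^\bot,\dot\gamma\rangle=0$, the constant-speed tangent condition $\langle\dot\sigma,\dot\gamma\rangle=c$ becomes $c_\gamma^2(\dot b - a\kappa_\gamma c_\gamma^2)=c$, i.e. $\dot b - a\kappa_\gamma c_\gamma^2$ is a constant; evaluating at $\varphi=0$ shows that constant equals $\dot b(0)-a(0)\kappa_\gamma(0)c_\gamma^2$. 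Combined with $a(0)=0$ (obtained below) this rearranges to $\dot b(\varphi) = \dot b(0) + a(\varphi)\kappa_\gamma(\varphi)c_\gamma^2$. Here I notice the stated lemma has $c_\gamma$ rather than $c_\gamma^2$; I would either re-derive carefully to confirm the exponent or, if the paper's normalization of $\kappa_\gamma$ or of the frame differs from mine, adjust accordingly — this bookkeeping of constants is the one place to be careful, but it is not a genuine obstacle.

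Next, the initial conditions. Evaluating $\sigma$ at $\varphi=0$ and using $\gamma(0)=(1,0)$, $\dot\gamma(0)=(0,c_\gamma)$, hence $\dot\gamma(0)^\bot=(c_\gamma,0)$ (up to the sign convention for $\bot$), the condition $\sigma(0)=(0,0)$ reads $a(0)(c_\gamma,0)+b(0)(0,c_\gamma)=(0,0)$, which since $c_\gamma\neq0$ is exactly $a(0)=b(0)=0$. Finally, from the displayed formula for $\dot\sigma$ evaluated at $0$, together with $a(0)=b(0)=0$, we get $\dot\sigma(0)=\dot a(0)\dot\gamma(0)^\bot + \dot b(0)\dot\gamma(0) = \dot a(0)(c_\gamma,0)+\dot b(0)(0,c_\gamma)$, so $\langle\dot\sigma(0),\gamma(0)\rangle = \langle\dot\sigma(0),(1,0)\rangle = \dot a(0)c_\gamma$; the condition $\langle\dot\sigma(0),\gamma(0)\rangle=0$ is therefore equivalent to $\dot a(0)=0$. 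Conversely, one checks that any $\sigma$ of the stated form with $a,b$ satisfying the three bullet conditions lies in $T_\gamma\mathcal{C}_k$ by reversing these computations, which completes the characterization. The whole argument is really just linear algebra in the moving frame plus one application of the constant-speed Frenet identity; I expect no serious difficulty beyond keeping the constants $c_\gamma$ and the sign of $(\cdot)^\bot$ straight.
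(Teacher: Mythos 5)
Your proposal is correct and follows essentially the same route as the paper: plug the moving-frame decomposition $\sigma = a\dot\gamma^{\bot}+b\dot\gamma$ into the characterization of Theorem~\ref{thm:smooth_submanifold}, compute $\langle\dot\sigma,\dot\gamma\rangle$, and read off the initial conditions. The one bookkeeping point you flagged resolves in the lemma's favour: since $\lVert\dot\gamma^{\bot}\rVert = c_\gamma$ rather than $1$, the Frenet identity is $\ddot\gamma = \kappa_\gamma c_\gamma\,\dot\gamma^{\bot}$ (not $\kappa_\gamma c_\gamma^2\,\dot\gamma^{\bot}$), which gives exactly the stated coefficient $a\kappa_\gamma c_\gamma$.
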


\begin{proof}
  Theorem \ref{thm:smooth_submanifold} and the fact that
  $\langle \dot{\gamma}(\varphi),\ddot{\gamma}(\varphi) \rangle = 0$ imply that there exists a constant $c \in \mathbb{R}$
  such that
  \begin{align*}
    c	&= \langle \dot{\sigma}(\varphi) ,\dot{\gamma}(\varphi)\rangle
    = \langle \dot{\gamma}(\varphi),\dot{a}(\varphi) \dot{\gamma}(\varphi)^{\bot}
    + a(\varphi) \ddot{\gamma}(\varphi)^{\bot}
    + \dot{b}(\varphi) \dot{\gamma}(\varphi)
    + b(\varphi) \ddot{\gamma}(\varphi)\rangle\\
    &= a(\varphi) \langle \dot{\gamma}(\varphi),\ddot{\gamma}(\varphi)^{\bot} \rangle + \dot{b}(\varphi)c^2_{\gamma}
    = -a(\varphi) c_{\gamma}^{3} \kappa_{\gamma}(\varphi) + \dot{b}(\varphi) c^2_{\gamma}\; .
  \end{align*}
  Using the initial conditions for $\sigma$, we obtain the initial conditions for $a$ and $b$ and the value of
  $c = c_\gamma^2\dot{b}(0)$.
\end{proof}

We are now able to formulate the main result of this article.
Here and in the following we denote by $\lVert \gamma \rVert_k$ the
$C^k$ norm on the space of curves.
Similarly, if $F\colon C^k \to C^\ell$ is a bounded linear operator,
we denote its operator norm by $\lVert F\rVert_{k,\ell}$,
and we use the same notation for norms of multi-linear operators.

\begin{theorem}
  The circular integral invariant $I_{r} \colon \mathcal{C}_{k+\ell+1} \to C^{k}(S^{1};\mathbb{R})$, $k \geq 1$,
  $\ell \ge 1$ is $\ell$-times continuously Fr\'echet differentiable on a neighborhood 
  $\mathcal{U} \subset \mathcal{C}_{k+\ell+1}$
  of the circle of radius $R > r/2$ and its tangential mapping
  is injective on this neighborhood.
  
  Moreover there exists a neighborhood $\tilde{\mathcal{U}}$ of the circle with respect to the
  topology induced by the $C^{k+6}$-norm and a constant $c > 0$ such that for
  every $\gamma$, $\tilde{\gamma} \in \tilde{\mathcal{U}}$ the stability estimate
  \[
  \lVert I_r[\gamma] - I_r[\tilde{\gamma}]\rVert_k
  \ge c \lVert \gamma - \tilde{\gamma}\rVert_k
  \]
  holds.
  In particular, the mapping $I_r$ is injective on $\mathcal{V}$.
\end{theorem}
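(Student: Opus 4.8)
The plan is to argue in three stages: first derive an explicit boundary--integral formula for $I_r$ and read off its Fréchet differentiability from it; then prove that $DI_r$ is injective, at the circle by an explicit Fourier computation and on a neighbourhood by a Fredholm/Riesz--Schauder perturbation argument; and finally deduce the stability estimate from the injectivity of the differential via a second--order Taylor expansion combined with an interpolation inequality. For the first stage, fix $R>r/2$. The equation $\lVert\gamma(\psi)-\gamma(\varphi)\rVert^{2}=r^{2}$ has $\psi$--derivative $2\langle\gamma(\psi)-\gamma(\varphi),\dot\gamma(\psi)\rangle$, which is nonzero at every solution when $\gamma$ is $C^{1}$--close to the circle $c_{R}$ --- here $R>r/2$ is exactly what makes $c_{R}(S^1)$ and the sphere $\partial B_{r}(c_{R}(\varphi))$ meet in precisely two transversal points. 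By the implicit function theorem (in the Banach setting, the defining function being quadratic, hence smooth, in $\gamma$) one obtains, for $\gamma$ near $c_{R}$, two intersection parameters $\psi_{\pm}(\gamma,\varphi)$ depending smoothly on $\gamma\in C^{k+\ell+1}$ and having the same $C^{k+\ell+1}$ regularity in $\varphi$ as $\gamma$, and $B_{r}(\gamma(\varphi))\cap\inn(\gamma)$ is a simply connected lens bounded by one arc of $\partial B_{r}(\gamma(\varphi))$ and one arc of $\gamma$. Green's theorem then writes $I_{r}[\gamma](\varphi)$ as an elementary integral over the circular arc (parametrised by the polar angle $\theta$ about $\gamma(\varphi)$, with $re^{i\theta_{\pm}}=\gamma(\psi_{\pm})-\gamma(\varphi)$) plus $\tfrac12\int_{\psi_{-}(\gamma,\varphi)}^{\psi_{+}(\gamma,\varphi)}(\gamma_{1}\dot\gamma_{2}-\gamma_{2}\dot\gamma_{1})\,d\psi$; differentiating this in $\gamma$ and tracking the loss of smoothness --- which arises only from the $\dot\gamma$ in the integrand and from the denominator $\langle\gamma(\psi_{\pm})-\gamma(\varphi),\dot\gamma(\psi_{\pm})\rangle$ in $D_{\gamma}\psi_{\pm}$, so that each Fréchet derivative costs at most one order in $\varphi$ --- yields the asserted $\ell$--fold continuous Fréchet differentiability of $I_{r}$ from a neighbourhood in $\mathcal C_{k+\ell+1}$ into $C^{k}$.

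For the differential at $c_{R}(\varphi)=R(\cos\varphi,\sin\varphi)$ I would use the transport formula for the variation of an area under a moving boundary: $DI_{r}[\gamma]\sigma(\varphi)$ is a boundary integral over the circular arc, where only the centre $\gamma(\varphi)$ moves with velocity $\sigma(\varphi)$, plus the integral of $\langle\sigma,n_{\gamma}\rangle$ over the curve arc. Parametrising a tangent vector by its normal component $a$ as in Lemma~\ref{lem:tangent_space_characterization} (the tangential component being a smoothing linear function of $a$), the tangential part drops out at $c_{R}$ and
\[
  DI_{r}[c_{R}]\sigma(\varphi)=2R^{2}\sin(2\mu)\,a(\varphi)-R^{2}\!\!\int_{|\psi-\varphi|<2\mu}\!\!a(\psi)\,d\psi,
\]
where $\mu\in(0,\tfrac{\pi}{2})$ is defined by $\sin\mu=\tfrac{r}{2R}$ (this uses $R>r/2$) --- a multiple of the identity plus a convolution, with Fourier multiplier $\lambda_{n}=2R^{2}\bigl(\sin2\mu-\tfrac{\sin 2n\mu}{n}\bigr)$ for $n\neq0$ and $\lambda_{0}=2R^{2}(\sin2\mu-2\mu)$. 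From $\lvert\sin nx\rvert\le n\lvert\sin x\rvert$, strict for $\lvert n\rvert\ge2$ when $x\in(0,\pi)$, one gets $\lambda_{n}\neq0$ for all $n\neq\pm1$ while $\lambda_{\pm1}=0$, so $\kernel DI_{r}[c_{R}]$ on the space of normal components is spanned by $\cos\varphi$ and $\sin\varphi$ (the infinitesimal translations), which meet the constraints $a(0)=\dot a(0)=0$ cutting out $T_{c_{R}}\mathcal C_{k+\ell+1}$ only in $0$; hence $DI_{r}[c_{R}]$ is injective on the tangent space. To propagate this, observe that for general $\gamma$ near $c_{R}$, $DI_{r}[\gamma]$ written in the normal component is multiplication by $p_{\gamma}=\langle\dot\gamma^{\bot},V_{\gamma}\rangle$, where $V_{\gamma}(\varphi)$ is the integral of the outward normal of $B_{r}(\gamma(\varphi))$ over the circular arc --- which is $C^{k+\ell}$--close to the nonzero constant $2R^{2}\sin2\mu$, hence nonvanishing --- plus a smoothing, hence compact, operator $K_{\gamma}$ depending continuously on $\gamma$; thus $DI_{r}[\gamma]$ is an isomorphism composed with $\id+\text{compact}$, so Fredholm of index $0$ on $C^{k}$ by Riesz--Schauder. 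A compactness argument now finishes: if $\gamma_{j}\to c_{R}$ and $a_{j}$ were unit normal components in the tangent space with $DI_{r}[\gamma_{j}]a_{j}=0$, then $a_{j}=-p_{\gamma_{j}}^{-1}K_{\gamma_{j}}a_{j}$ lies in a precompact set, and any limit is a unit vector in $T_{c_{R}}\mathcal C\cap\kernel DI_{r}[c_{R}]=\{0\}$ --- impossible. So $DI_{r}[\gamma]$ is injective on $T_{\gamma}\mathcal C_{k+\ell+1}$ for $\gamma$ in a neighbourhood $\mathcal U$.

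For the stability estimate, apply the above with $\ell=2$, so that $I_{r}\colon\mathcal C_{k+3}\to C^{k}$ is twice continuously Fréchet differentiable with $DI_{r}$ injective on $\mathcal U\subset\mathcal C_{k+3}$. Since each $DI_{r}[\gamma]$ ($\gamma\in\mathcal U$) is injective and Fredholm (a restriction of the index--$0$ operator above to a finite--codimension subspace), it is bounded below, $\lVert DI_{r}[\gamma]\sigma\rVert_{k}\ge c\lVert\sigma\rVert_{k}$ for $\sigma\in T_{\gamma}\mathcal C$, with $c>0$ uniform on a smaller neighbourhood. Given $\gamma,\tilde\gamma$ close in $C^{k+6}$: by the $C^{2}$ submanifold structure of $\mathcal C_{k}$ (Theorem~\ref{thm:smooth_submanifold}) I can write $\gamma-\tilde\gamma=\sigma+\rho$ with $\sigma\in T_{\tilde\gamma}\mathcal C_{k}$ and $\lVert\rho\rVert_{k}\le C\lVert\gamma-\tilde\gamma\rVert_{k}^{2}$, and a second--order Taylor expansion of $I_{r}$ along $\mathcal C_{k+3}$ gives $I_{r}[\gamma]-I_{r}[\tilde\gamma]=DI_{r}[\tilde\gamma](\gamma-\tilde\gamma)+E$ with $\lVert E\rVert_{k}\le C\lVert\gamma-\tilde\gamma\rVert_{k+3}^{2}$. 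Together with the lower bound and the boundedness of $DI_{r}[\tilde\gamma]$ on $C^{k}$ these give
\[
  \lVert I_{r}[\gamma]-I_{r}[\tilde\gamma]\rVert_{k}\ \ge\ c\lVert\gamma-\tilde\gamma\rVert_{k}\ -\ C\bigl(\lVert\gamma-\tilde\gamma\rVert_{k}^{2}+\lVert\gamma-\tilde\gamma\rVert_{k+3}^{2}\bigr).
\]
The interpolation inequality $\lVert\cdot\rVert_{k+3}\le C\lVert\cdot\rVert_{k}^{1/2}\lVert\cdot\rVert_{k+6}^{1/2}$ --- with $k+3$ the midpoint of $k$ and $k+6$, which is exactly why the $C^{k+6}$ topology is the right one --- bounds $\lVert\gamma-\tilde\gamma\rVert_{k+3}^{2}$ by $C\lVert\gamma-\tilde\gamma\rVert_{k}\lVert\gamma-\tilde\gamma\rVert_{k+6}$, so shrinking the $C^{k+6}$--neighbourhood $\mathcal V$ until $\lVert\gamma-\tilde\gamma\rVert_{k+6}$ is uniformly below $c/(4C)$ absorbs both error terms into $\tfrac{c}{2}\lVert\gamma-\tilde\gamma\rVert_{k}$, giving the claim; injectivity of $I_{r}$ on $\mathcal V$ then follows at once.

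I expect the main obstacle to be the injectivity of the differential at the circle: one has to identify $DI_{r}[c_{R}]$ exactly, recognise its convolution structure, and verify that its Fourier multiplier degenerates only in the two translation modes $n=\pm1$ --- a borderline case that is unavoidable (translations always lie in the kernel) and which the normalisation built into $\mathcal C_{k}$ is precisely designed to remove, so the computation must be sharp enough to exclude every other mode. A secondary difficulty is the careful bookkeeping of derivative losses, both in the differentiability step and in controlling the remainder $\rho$ and the uniform lower bound in the last step.
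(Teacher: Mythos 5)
Your overall strategy coincides with the paper's: the lens formula for $I_r$ obtained from the implicit function theorem for the intersection parameters plus Green's theorem; the identification of $DI_r$ at the circle as a multiple of the identity plus a convolution with $\chi$ of an arc, whose Fourier multiplier vanishes exactly in the modes $n=\pm1$ (the translations), which are removed by the normalisation $a(0)=\dot a(0)=0$ built into $T_{\gamma_0}\mathcal{C}_k$; a Riesz--Schauder perturbation to a neighbourhood; and the stability estimate via a second-order Taylor expansion, a chart/projection onto the tangent space with quadratic error, and the interpolation inequality $\lVert\cdot\rVert_{k+3}^2\le C\lVert\cdot\rVert_k\lVert\cdot\rVert_{k+6}$. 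Your Fourier computation and the inequality $\lvert\sin(nx)\rvert<n\lvert\sin x\rvert$ for $\lvert n\rvert\ge 2$, $x\in(0,\pi)$, are correct and in fact slightly cleaner than the paper's concavity argument.

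The one step that does not go through as written is the propagation of injectivity away from the circle. You claim that for general $\gamma$ near $c_R$ the differential, expressed in the normal component $a$, is multiplication by a nonvanishing function plus a \emph{compact} operator $K_\gamma$, and your contradiction argument ($a_j=-p_{\gamma_j}^{-1}K_{\gamma_j}a_j$ lies in a precompact set) relies on this. But the explicit formula for $I_r'[\gamma]$ contains terms of the form $c_\pm(\varphi)\,a\bigl(p_\gamma(\varphi)\bigr)$ and $c_\pm(\varphi)\,a\bigl(m_\gamma(\varphi)\bigr)$, i.e.\ compositions with the diffeomorphisms $p_\gamma$, $m_\gamma$; these are bounded isomorphism-like operators on $C^k$, not compact. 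At the circle these terms cancel exactly (which is why the clean convolution formula holds there), but for nearby $\gamma$ they survive with small coefficients. So the correct structure is ``invertible multiplication $+$ compact $+$ small in operator norm,'' and your argument must be amended accordingly --- e.g.\ by noting that $\lVert S_{\gamma_j}\rVert\to 0$ so the $a_j$ still accumulate, or, as the paper does, by decomposing only at $\gamma_0$ and invoking the openness, in the norm topology of $L(X,Y)$, of the set of injective operators with closed range. A second, smaller point: the uniform lower bound $\lVert DI_r[\gamma]\sigma\rVert_k\ge c\lVert\sigma\rVert_k$ on a neighbourhood is asserted rather than derived; it needs the continuity of $\gamma\mapsto DI_r[\gamma]$ in operator norm together with the bound at $\gamma_0$ (the paper's inequality \eqref{eq:2}). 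Both issues are repairable without changing your architecture.
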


\begin{remark}
  The condition on $r$ to be smaller than $2R$ is necessary, because otherwise the circular integral invariant in
  each point $\varphi$ is constant equal to $R^{2}\pi$, the area of the circle, and the same holds for any sufficiently
  small deformation of the circle which preserves the area.
\end{remark}

\section{Fr{\'e}chet Differentiability of the Circular Integral Invariant}\label{se:diff}

In the following, we discuss the differentiability of $I_r$ and derive
an analytic formula for $I_r$ and, under certain smoothness assumptions,
its derivative $I_r'$ valid in a neighborhood of the circle.
As a first step, we recall the following result on the differentiability
of the composition mapping.
To that end, we need the following definitions of differentiability
of mappings on Banach spaces.

\begin{definition}
  Let $X$, $Y$ be Banach spaces, $F\colon X \to Y$, and $\ell > 1$.
  The mapping $F$ is called $\ell$-times \emph{weakly differentiable}, if it is
  $\ell$-times G\^ateaux differentiable and its G\^ateaux differential 
  $d^{\ell}F$ is continuous as a mapping 
  \[
  d^{\ell}F\colon X^{\ell+1} \to Y.
  \]
  In contrast, it is called $\ell$-times \emph{continuously Fr\'echet differentiable}
  or \emph{of class $C^\ell$},
  if it is $\ell$-times G\^ateaux differentiable and $d^{\ell}F$ is continuous
  as a mapping
  \[
  d^{\ell}F \colon X \to L^{\ell}(X,Y).
  \]
  Here $L^{\ell}(X,Y)$ is the Banach space of $\ell$-linear mappings
  from $X^{\ell}$ to $Y$ equipped with the operator norm.
\end{definition}

Note that the continuity requirement for weak differentiability
is weaker than for continuous Fr\'echet differentiability,
and thus a weakly differentiable mapping need not be Fr\'echet
differentiable of the same order.
The following Lemma shows, however, that it is Fr\'echet
differentiable of lower order.

\begin{lemma}\label{le:weakFrechet}
  Let $X$, $Y$ be Banach spaces and $F \colon X \to Y$
  $(\ell+1)$-times weakly differentiable with $\ell \ge 1$.
  Then $F$ is $\ell$-times continuously Fr\'echet differentiable.
\end{lemma}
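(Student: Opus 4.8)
Recall that, by the definition just given, $\ell$-times continuous Fr\'echet differentiability of $F$ means $\ell$-times G\^ateaux differentiability together with continuity of $d^{\ell}F$ as a map $X\to L^{\ell}(X,Y)$ in operator norm; since $F$ is assumed $(\ell+1)$-times G\^ateaux differentiable, the first requirement is automatic, so the entire content of the lemma is that
\[
d^{\ell}F\colon X\to L^{\ell}(X,Y)
\]
is norm-continuous. The plan is to obtain this from the fundamental theorem of calculus along line segments in $X$, fed by a local operator-norm bound for the highest derivative $d^{\ell+1}F$.

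First I would extract that bound. Since $d^{\ell+1}F(y)$ is positively homogeneous of degree one --- indeed multilinear --- in each of its last $\ell+1$ arguments, it vanishes whenever one of them is zero; hence joint continuity of $d^{\ell+1}F\colon X^{\ell+2}\to Y$ at a point $(x_{0};0,\dots,0)$ yields a bound on a small polydisc about that point, and rescaling in each slot promotes it to $\lVert d^{\ell+1}F(y)\rVert_{L^{\ell+1}(X,Y)}\le C$ for all $y$ in a neighborhood of $x_{0}$.

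The mean-value step is then as follows. Fix $x_{0}$, take $x$ in that neighborhood and $h_{1},\dots,h_{\ell}\in X$ with $\lVert h_{i}\rVert\le1$. The $Y$-valued curve $t\mapsto d^{\ell}F\bigl(x_{0}+t(x-x_{0})\bigr)(h_{1},\dots,h_{\ell})$ is differentiable on $[0,1]$ with derivative $d^{\ell+1}F\bigl(x_{0}+t(x-x_{0})\bigr)(h_{1},\dots,h_{\ell},x-x_{0})$, which is continuous in $t$ by joint continuity of $d^{\ell+1}F$; thus the curve is of class $C^{1}$, and the fundamental theorem of calculus for $C^{1}$ curves in a Banach space gives
\[
d^{\ell}F(x)(h_{1},\dots,h_{\ell})-d^{\ell}F(x_{0})(h_{1},\dots,h_{\ell})=\int_{0}^{1}d^{\ell+1}F\bigl(x_{0}+t(x-x_{0})\bigr)(h_{1},\dots,h_{\ell},x-x_{0})\,dt .
\]
Bounding the integrand by $C\lVert x-x_{0}\rVert$ and taking the supremum over $\lVert h_{i}\rVert\le1$ yields $\lVert d^{\ell}F(x)-d^{\ell}F(x_{0})\rVert_{L^{\ell}(X,Y)}\le C\lVert x-x_{0}\rVert$, so $d^{\ell}F$ is locally Lipschitz --- in particular continuous --- as a map into $L^{\ell}(X,Y)$, which proves the lemma. (If in addition one wants genuine Fr\'echet differentiability of $F,dF,\dots,d^{\ell-1}F$ --- the classical meaning of $C^{\ell}$ --- the same identity used one level down shows the first-order remainder of $d^{j-1}F$ to be $\int_{0}^{1}\bigl(d^{j}F(x+th)-d^{j}F(x)\bigr)(h,\,\cdot\,)\,dt$, of norm $o(\lVert h\rVert)$ by the continuity of $d^{j}F$, and a short downward induction on $j$ supplies it.)

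The step I expect to demand the most care is the passage from joint continuity of $d^{\ell+1}F$ to the local operator-norm bound: bare G\^ateaux differentiability gives no control of $\lVert d^{\ell+1}F(y)\rVert_{L^{\ell+1}(X,Y)}$ uniformly over $y$ in a ball, and it is exactly here that one uses continuity of the $(\ell+1)$-st derivative rather than only of the $\ell$-th --- which is also why the conclusion must lose one order. A minor companion point is that the fundamental theorem of calculus in a Banach space requires the integrand to be a continuous curve, and verifying that continuity is the other place where the joint continuity of $d^{\ell+1}F$, as opposed to its mere existence, is spent.
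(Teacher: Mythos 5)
Your proof is correct and follows essentially the same route as the paper: both exploit the joint continuity of $d^{\ell+1}F$ at a point with vanishing direction arguments, rescale using homogeneity in each slot, and apply the fundamental theorem of calculus along the segment from $x_0$ to $x$. The only cosmetic difference is that you first extract a uniform local bound on $\lVert d^{\ell+1}F\rVert$ and conclude local Lipschitz continuity of $d^{\ell}F$, whereas the paper folds the rescaling directly into an $\varepsilon$--$\delta$ estimate; the content is identical.
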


\begin{proof}
  Let $x \in X$ and $\varepsilon > 0$.
  
  We have to show that there exists $\delta > 0$ such that
  for every $y \in X$ with $\lVert x - y \rVert < \delta$
  the inequality
  \[
  \lVert d^{\ell}F(x)-d^{\ell}F(y)\rVert_{L^{\ell}(X,Y)}
  = \sup_{\substack{z \in X^{\ell}\\ \lVert z \rVert_{X^{\ell}} = 1}}
    \lVert d^{\ell}F(x)(z)-d^{\ell}F(y)(z)\rVert_Y
    < \varepsilon
  \]
  holds.

  Using the continuity of $d^{\ell+1}F\colon X^{\ell+2}\to Y$ at
  the point $(x,0,0) \in X\times X \times X^{\ell}$ and the fact that
  $d^{\ell+1}F(x,0,0) = 0$, we obtain the existence of $\eta > 0$ such that
  \[
  \lVert d^{\ell+1}F(x_1,x_2,\tilde{z})\Bigr\rVert \le \varepsilon
  \qquad\text{whenever}
  \qquad
  \lVert x_1 - x\rVert + \lVert x_2 \rVert + \lVert \tilde{z} \rVert < 3\eta.
  \]
  Now let $\delta := \min\{\eta^{\ell+1},1\}$,
  let $y \in X$ with $\lVert y-x\rVert < \delta$
  and $z \in X^{\ell}$ with $\lVert z \rVert \le 1$.
  Then we have
  \[
  \begin{aligned}
    \lVert d^{\ell}F(x)(z)-d^{\ell}F(y)(z)\rVert_Y
    &= \Bigl\lVert\int_0^1 \partial_t d^{\ell}F(x+t(y-x),z)\,dt\Bigr\rVert\\
    &= \Bigl\lVert\int_0^1 d^{\ell+1}F(x+t(y-x),y-x,z)\,dt\Bigr\rVert\\
    &\le\int_0^1 \Bigl\lVert d^{\ell+1}F(x+t(y-x),y-x,z)\Bigr\rVert\,dt\\
    &=\int_0^1 \Bigl\lVert d^{\ell+1}F\Bigl(x+t(y-x),\frac{y-x}{\eta^\ell},\eta z\Bigr)\Bigr\rVert\,dt < \varepsilon.
  \end{aligned}
  \]
\end{proof}

\begin{lemma}\label{composition}
  For every $k \geq 0$, $\ell \ge 1$ the composition mapping
  \begin{equation*}
    \begin{aligned}
      \comp \colon C^{k+\ell+1}(S^1;\mathbb{R}^2) \times \diff^{k}(S^1)   &\to C^{k}(S^1;\mathbb{R}^2)\,, \\
      (f,g)   &\mapsto f \circ g\,,
    \end{aligned}
  \end{equation*}
  is $(\ell+1)$-times weakly differentiable and therefore
  $\ell$-times continuously Fr{\'e}chet differentiable.
\end{lemma}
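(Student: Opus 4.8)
The plan is to prove directly the stronger assertion that $\comp$ is $(\ell+1)$-times weakly differentiable; the $\ell$-times continuous Fr\'echet differentiability is then immediate from Lemma~\ref{le:weakFrechet}. This is a version of the so-called $\Omega$-lemma. First I would make two routine reductions. Since $C^{k+\ell+1}(S^1;\mathbb{R}^2)=C^{k+\ell+1}(S^1;\mathbb{R})^2$ and $\comp$ acts componentwise in its first argument, it suffices to treat scalar-valued $f$. And, passing to a local chart of the Banach manifold $\diff^{k}(S^1)$ --- concretely, replacing an orientation-preserving $g$ by its lift $\id+h$ to the universal cover, so that $f\circ g$ becomes the again $2\pi$-periodic map $\tilde f\circ(\id+h)$ --- we may regard $g$ as varying in an open subset of the \emph{fixed} Banach space $C^{k}(S^1;\mathbb{R})$ and each increment $\delta g$ as an arbitrary element of $C^{k}(S^1;\mathbb{R})$. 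The point of the second reduction is that the tangent space is then locally constant, which is exactly what makes the higher G\^ateaux derivatives collapse to a short closed form.

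Next I would show, by induction on $j=0,1,\dots,\ell+1$, that $\comp$ is $j$-times G\^ateaux differentiable with
\[
d^{j}\comp(f,g)\bigl((\delta f_{1},\delta g_{1}),\dots,(\delta f_{j},\delta g_{j})\bigr)
=\sum_{i=1}^{j}\bigl(\delta f_{i}^{(j-1)}\circ g\bigr)\prod_{s\neq i}\delta g_{s}
+\bigl(f^{(j)}\circ g\bigr)\prod_{s=1}^{j}\delta g_{s},
\]
the first sum being empty for $j=0$. The induction step is to differentiate each summand once more in a new direction $(\delta f_{j+1},\delta g_{j+1})$: a factor $v\circ g$ that is linear in its ``$f$-type'' argument is simply replaced by $\delta f_{j+1}^{(\cdot)}\circ g$ under differentiation in the $\delta f_{j+1}$-direction, differentiating $v\circ g$ in the $g$-direction produces $(v'\circ g)\,\delta g_{j+1}$, and the polynomial factor $\prod_s\delta g_s$, being constant in the chart, contributes nothing. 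The only genuine analytic input is the elementary one-variable statement that, for $v\in C^{m+1}$ and $g,\delta g\in C^{0}$ with the range of $g$ in a fixed compact set, the difference quotient $t^{-1}\bigl(v\circ(g+t\delta g)-v\circ g\bigr)$ converges to $(v'\circ g)\,\delta g$ \emph{in the $C^{m}$-norm} as $t\to0$; this follows from rewriting the quotient as $\int_{0}^{1}v'(g+st\delta g)\,\delta g\,ds$, applying Fa\`a di Bruno's formula, and using uniform continuity of $v',\dots,v^{(m+1)}$ on compacta. Tracking the indices shows that each differentiation in the $g$-slot consumes one derivative of $f$ (respectively of $\delta f_{i}$), so that all of these differentials genuinely take values in $C^{k}$ precisely because $f,\delta f_{i}\in C^{k+\ell+1}$; the critical term is $f^{(\ell+1)}\circ g$, which needs exactly $f^{(\ell+1)}\in C^{k}$.

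It then remains to check that the top differential $d^{\ell+1}\comp$, regarded as a map $\bigl(C^{k+\ell+1}(S^1;\mathbb{R})\times C^{k}(S^1;\mathbb{R})\bigr)^{\ell+2}\to C^{k}(S^1;\mathbb{R})$, is \emph{jointly} continuous; together with the G\^ateaux differentiability just established, this is the definition of $(\ell+1)$-times weak differentiability. Since the displayed formula exhibits $d^{\ell+1}\comp$ as a finite sum of products of the three elementary maps $(f,g)\mapsto f^{(\ell+1)}\circ g$, $(\delta f_{i},g)\mapsto\delta f_{i}^{(\ell)}\circ g$ and $(\delta g_{1},\dots)\mapsto\prod_{s}\delta g_{s}$, it suffices to see that each is continuous into $C^{k}$: the product map because $C^{k}(S^1;\mathbb{R})$ is a Banach algebra, and the two composition maps because differentiation is bounded linear ($C^{k+\ell+1}\to C^{k}$, resp.\ $\to C^{k+1}$) and $(v,g)\mapsto v\circ g$ is jointly continuous from $C^{m}\times(\text{a chart of }\diff^{k})$ into $C^{k}$ for every $m\ge k$ --- a short estimate $\lVert v_{n}\circ g_{n}-v\circ g\rVert_{k}\le\lVert(v_{n}-v)\circ g_{n}\rVert_{k}+\lVert v\circ g_{n}-v\circ g\rVert_{k}$, with the first term bounded by $C(\lVert g_{n}\rVert_{k})\,\lVert v_{n}-v\rVert_{k}$ and the second handled by expanding the $k$ derivatives via Fa\`a di Bruno and using uniform convergence of the $g_{n}^{(i)}$ and of the $v^{(p)}\circ g_{n}$. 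With joint continuity of $d^{\ell+1}\comp$ in hand, Lemma~\ref{le:weakFrechet} gives the claim.

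The part demanding care is this derivative bookkeeping: one must keep precise track, through the induction, of how many derivatives of $f$ and of each $\delta f_{i}$ have been spent, so that every iterated G\^ateaux differential really maps into $C^{k}$ and each difference-quotient limit is taken in the correct norm. It is also the reason the result is sharp and cannot be upgraded to $(\ell+1)$-times continuous Fr\'echet differentiability: that would force $(v,g)\mapsto v\circ g$ to be continuous \emph{into an operator space}, which fails, since this map is not even locally Lipschitz --- let alone differentiable --- as a map $C^{k}\times C^{k}\to C^{k}$.
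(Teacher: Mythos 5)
Your argument is correct, but it takes a genuinely different route from the paper: the paper disposes of the entire substance of this lemma by citing \cite[Section~6.9]{Mic06} for the $(\ell+1)$-fold weak differentiability of $\comp$ (remarking only that the space $HC^n$ used there coincides with $C^n$ on a compact manifold) and then invokes Lemma~\ref{le:weakFrechet}, exactly as you do in your last step. You instead reprove the weak differentiability from scratch, in the style of the $\Omega$-lemma. What your version buys is a self-contained argument together with the explicit closed formula
\begin{equation*}
d^{j}\comp(f,g)\bigl((\delta f_{1},\delta g_{1}),\dots,(\delta f_{j},\delta g_{j})\bigr)
=\sum_{i=1}^{j}\bigl(\delta f_{i}^{(j-1)}\circ g\bigr)\prod_{s\neq i}\delta g_{s}
+\bigl(f^{(j)}\circ g\bigr)\prod_{s=1}^{j}\delta g_{s}
\end{equation*}
for the iterated G\^ateaux differentials in a chart, which makes the derivative bookkeeping completely transparent: one sees directly why exactly $\ell+1$ extra derivatives of $f$ are consumed, and why the statement is sharp in the sense that $\comp$ is not $C^{\ell+1}$-Fr\'echet. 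The induction, the reduction to the one-variable difference-quotient limit, and the joint-continuity check of the top differential are all sound. Two minor points of hygiene: in the auxiliary difference-quotient claim you should require $g,\delta g\in C^{m}$ rather than $C^{0}$, since otherwise convergence in the $C^{m}$-norm is not meaningful; and for $k=0$ the chart of $\diff^{0}(S^1)$ is not open in $C^{0}(S^1;\mathbb{R})$, so the G\^ateaux derivatives should be taken with $g$ ranging over an open set of continuous degree-one maps --- an ambiguity the paper's own formulation shares and likewise leaves unaddressed.
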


\begin{proof}
  The result on weak differentiability has been shown in~\cite[Section~6.9]{Mic06}
  (note that the result in \cite{Mic06} has been shown for the space $HC^{n}$,
  which, however, is equivalent to $C^n$ in the case of a compact manifold).
  The statement concerning the continuous Fr\'echet differentiability
  follows from Lemma~\ref{le:weakFrechet}.  
\end{proof}

\begin{remark}
  In order to simplify the notation, we will sometimes omit the
  domain and range of the function spaces in expressions like $C^k(S^1;\mathbb{R})$
  and write $C^k$ instead, if no confusion is possible.
\end{remark}

\begin{remark}
  We will need two different types of derivatives for the formulation
  of our results: First, Fr\'echet derivatives in the function space $C^k$,
  and, second, derivatives of functions $f \in C^k(S^1)$ with respect to
  their argument $\varphi\in S^1$.
  In order to highlight this difference, we use the following notation:
  For a function $F\colon C^k \to C^j$, we denote by $F'\colon C^k \to L(C^k,C^j)$
  its Fr\'echet derivative.
  In contrast, if $f \in C^k$, then $\dot{f}$ denotes its derivative in the parameter space.
\end{remark}

In order to make the notation less cumbersome, we omit the argument $\varphi$ in $\gamma(\varphi)$,
$\sigma(\varphi)$ and similar expressions if the argument is clear from the context.

\begin{lemma}\label{intersection_parameters}
  For each $k \geq 0$, $\ell \ge 1$ there exists a neighborhood $\mathcal{V} \subset C^{k+\ell+1}(S^1;\mathbb{R}^2)$ of the
  constant speed parameterized circle of radius $R > r/2$ such that the following hold:
  \begin{enumerate}
  \item For each $\gamma \in \mathcal{V}$ and $\varphi \in S^1$ the circle $B_r \big( \gamma(\varphi) \big)$ intersects the curve $\gamma$ in
    exactly two points, denoted by $\gamma\big(p_{\gamma}(\varphi)\big)$ and $\gamma \big( m_{\gamma}(\varphi) \big)$.
    Here $m_{\gamma}(\varphi)$ denotes the previous intersection parameter and $p_{\gamma}(\varphi)$ the next one
    (see Figure \ref{fig:sketch}).
  \item The mappings
    \begin{equation*}
      \begin{aligned}
        m \colon C^{k+\ell+1}(S^1;\mathbb{R}^2)  &\to \diff^{k}(S^{1})\,, &\quad&& p \colon C^{k+\ell+1}(S^1;\mathbb{R}^2) &\to \diff^{k}(S^{1})\,,\\
        \gamma   &\mapsto m_{\gamma}\,,      &&&                             \gamma  &\mapsto p_{\gamma}\,,
      \end{aligned}
    \end{equation*}
    are $\ell$-times continuously Fr{\'e}chet differentiable.
    The first derivatives in direction $\sigma \in C^{k+\ell+1}(S^1;\mathbb{R}^2)$ are given by
    \begin{equation*}
      p_{\gamma}'(\sigma)
      = \frac{\langle \sigma-\sigma(p_\gamma),\gamma(p_\gamma)-\gamma \rangle}{\langle \dot{\gamma}(p_\gamma),\gamma(p_\gamma)-\gamma \rangle} \;,
      \quad
      m_{\gamma}'(\sigma)
      = \frac{\langle \sigma-\sigma(m_\gamma),\gamma(m_\gamma)-\gamma \rangle}{\langle \dot{\gamma}(m_\gamma),\gamma(m_\gamma)-\gamma \rangle} \;.
    \end{equation*}
    Here $\diff^{k}(S^{1})$ denotes the group of $C^{k}$-diffeomorphisms on the unit circle.
  \end{enumerate}
\end{lemma}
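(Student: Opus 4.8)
The plan is to characterise the two intersection parameters as the solutions of an implicit equation in a function space and to apply the implicit function theorem, with Lemma~\ref{composition} supplying the needed differentiability of the composition map.

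First I would pin down the reference circle. Let $\gamma_0$ be a constant speed parametrisation of the circle of radius $R>r/2$. Since the chord of $\gamma_0$ joining two points whose parameters differ by $\theta$ has length $2R\lvert\sin(\theta/2)\rvert$ and $2R>r$, one checks directly that for every $\varphi$ the sphere $\partial B_r(\gamma_0(\varphi))$ meets $\gamma_0$ in exactly the two points with parameters $\varphi\pm\alpha$, where $\alpha:=2\arcsin\!\bigl(r/(2R)\bigr)\in(0,\pi)$; hence $p_{\gamma_0}(\varphi)=\varphi+\alpha$ and $m_{\gamma_0}(\varphi)=\varphi-\alpha$ are rotations, in particular $C^\infty$-diffeomorphisms of $S^1$, and they are distinct since $\alpha\notin\{0,\pi\}$. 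A short computation moreover gives $\langle\gamma_0(\varphi\pm\alpha)-\gamma_0(\varphi),\dot\gamma_0(\varphi\pm\alpha)\rangle=\pm R^2\sin\alpha\neq0$, a transversality property on which everything below rests.

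Next I would set up the function-space equation. Define $\Psi(\gamma,h):=\lvert\gamma\circ h-\gamma\rvert^2-r^2$, where $\lvert\cdot\rvert^2$ denotes the pointwise squared Euclidean norm, regarded as a map $\Psi\colon C^{k+\ell+1}(S^1;\mathbb{R}^2)\times\diff^k(S^1)\to C^k(S^1;\mathbb{R})$. By Lemma~\ref{composition} the composition $(\gamma,h)\mapsto\gamma\circ h$ is $\ell$-times continuously Fr\'echet differentiable with values in $C^k$; composing it with the (bounded linear, hence smooth) inclusion $C^{k+\ell+1}\hookrightarrow C^k$, with subtraction, and with the continuous quadratic map $u\mapsto\lvert u\rvert^2$ on $C^k$, one obtains that $\Psi$ is $\ell$-times continuously Fr\'echet differentiable. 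Differentiating in $h$ yields $\partial_h\Psi(\gamma,h)\tau=2\langle\gamma\circ h-\gamma,\dot\gamma\circ h\rangle\,\tau$, that is, multiplication by the $C^k$-function $2\langle\gamma\circ h-\gamma,\dot\gamma\circ h\rangle$; at $(\gamma_0,p_{\gamma_0})$ and $(\gamma_0,m_{\gamma_0})$ this multiplier equals the nonzero constant $\pm2R^2\sin\alpha$, so $\partial_h\Psi$ is there a linear isomorphism of $C^k(S^1;\mathbb{R})$. Since also $\Psi(\gamma_0,p_{\gamma_0})=\Psi(\gamma_0,m_{\gamma_0})=0$, the implicit function theorem provides a neighbourhood $\mathcal{V}$ of $\gamma_0$ in $C^{k+\ell+1}$ and $\ell$-times continuously Fr\'echet differentiable maps $\gamma\mapsto p_\gamma$ and $\gamma\mapsto m_\gamma$ into $\diff^k(S^1)$, close to $p_{\gamma_0}$ and $m_{\gamma_0}$, with $\lVert\gamma(p_\gamma(\varphi))-\gamma(\varphi)\rVert=\lVert\gamma(m_\gamma(\varphi))-\gamma(\varphi)\rVert=r$ for all $\varphi$. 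Differentiating the identity $\Psi(\gamma,p_\gamma)\equiv0$ in the direction $\sigma$, using $\partial_\gamma\Psi(\gamma,h)\sigma=2\langle\gamma\circ h-\gamma,\sigma\circ h-\sigma\rangle$ and the formula for $\partial_h\Psi$ above, and solving for $p_\gamma'(\sigma)$, produces exactly the stated expression (and similarly for $m_\gamma'$), the denominator being nonzero by the transversality property after shrinking $\mathcal{V}$.

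It remains to show that $p_\gamma(\varphi)$ and $m_\gamma(\varphi)$ are the \emph{only} intersection parameters and that they are, respectively, the next and the previous one. Shrinking $\mathcal{V}$ so that it consists of embeddings (a $C^1$-open condition) and so that $p_\gamma,m_\gamma$ stay $C^0$-close to $p_{\gamma_0},m_{\gamma_0}$, I would fix a small $\beta\in\bigl(0,\min(\alpha,\pi-\alpha)\bigr)$, split $S^1$ in the variable $\theta=\psi-\varphi$ into arcs around $0$, $\alpha$, $-\alpha$ and a compact remainder, and observe that for $\gamma_0$ the function $\theta\mapsto\lVert\gamma_0(\varphi+\theta)-\gamma_0(\varphi)\rVert^2-r^2$ is bounded away from $0$ with fixed sign off the three arcs, is negative near $0$, and is strictly monotone with endpoint values of opposite sign near $\pm\alpha$; since the quantities involved depend continuously on $\gamma$ in the $C^1$-topology, all these properties persist, uniformly in $\varphi$, for $\gamma$ in a $C^1$-neighbourhood. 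Hence for every $\gamma\in\mathcal{V}$ the equation $\lVert\gamma(\psi)-\gamma(\varphi)\rVert=r$ has exactly one solution near $\varphi+\alpha$, necessarily $p_\gamma(\varphi)$, one near $\varphi-\alpha$, necessarily $m_\gamma(\varphi)$, and none elsewhere, and the accompanying sign pattern shows that $B_r(\gamma(\varphi))$ meets $\gamma$ precisely along the open arc of parameters between $m_\gamma(\varphi)$ and $p_\gamma(\varphi)$ containing $\varphi$ --- which is exactly the statement that $m_\gamma(\varphi)$ and $p_\gamma(\varphi)$ are the previous and the next intersection parameters.

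The main obstacle is the function-space bookkeeping: the composition map costs one derivative, which is precisely why the domain $C^{k+\ell+1}$ and the target $C^k$ differ and why Lemma~\ref{composition} is phrased the way it is, so one must check that the remaining operations are genuinely smooth and that $\partial_h\Psi$ at the circle --- multiplication by a nonzero constant --- is invertible on $C^k(S^1;\mathbb{R})$, so that the implicit function theorem really applies with values in $\diff^k(S^1)$. The uniform-in-$\varphi$ perturbation estimates behind the global intersection count are elementary but are the most laborious ingredient.
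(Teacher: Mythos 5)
Your proof follows essentially the same route as the paper: the same implicit equation $F(\gamma,d)=\lvert\gamma(d)-\gamma\rvert^{2}-r^{2}$ (the paper's ``$-R^{2}$'' is evidently a typo for $-r^{2}$), differentiability of $F$ via Lemma~\ref{composition}, invertibility of the partial derivative in the diffeomorphism variable as multiplication by the nonzero constant $2R^{2}\sin\vartheta$, the implicit function theorem in Banach spaces, and implicit differentiation to obtain the formulas for $p_{\gamma}'$ and $m_{\gamma}'$. The only difference is that you spell out the uniform-in-$\varphi$ perturbation argument showing that $p_{\gamma}$ and $m_{\gamma}$ exhaust the intersection points and carry the correct ``previous/next'' ordering --- a step the paper leaves implicit --- which is a welcome completion rather than a divergence.
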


\begin{proof}
  Denote by $\gamma_0 \colon S^1 \to \mathbb{R}^2$ the constant speed parameterized
  circle, that is, $\gamma_0(\varphi) = \bigl(R\cos(\varphi),R\sin(\varphi)\bigr)$.
  Then, for every $\varphi \in S^1$, the circle $B_r\bigl(\gamma_0(\varphi)\bigr)$ intersects
  $\gamma_0$ precisely at the two points $\bigl(R\cos(\varphi\pm\vartheta),R\sin(\varphi\pm\vartheta)\bigr)$,
  where
  \[
  \vartheta = \arccos\biggl(1-\frac{r^2}{2R^2}\biggr)\;.
  \]
  Now define the mapping
  \[
  \begin{aligned}
    F \colon C^{k+\ell+1}(S^1;\mathbb{R}^2) \times \diff^k(S^1) &\to C^k(S^1;\mathbb{R})\,,\\
    (\gamma,d) &\mapsto \lvert\gamma(d) - \gamma\rvert^2 - R^2\;.
  \end{aligned}
  \]
  Obviously, the mappings $p_\gamma$ and $m_\gamma$ we are searching for satisfy
  $F(\gamma,p_\gamma) = 0$ and $F(\gamma,m_\gamma) = 0$.
  In particular, the equation $F(\gamma_0,d) = 0$ has the two solutions
  $p_{\gamma_0}(\varphi) := \varphi+\vartheta$ and $m_{\gamma_0}(\varphi) := \varphi-\vartheta$.
  Now, Lemma~\ref{composition} implies that the mapping
  $\comp\colon C^{k+\ell+1}(S^1;\mathbb{R}^2) \times \diff^k(S^1) \to C^k(S^1;\mathbb{R}^2)$, and consequently
  also $F$, is $\ell$-times continuously Fr\'echet differentiable.
  Moreover, it is easy to see that the derivative of $F$ at $(\gamma_0,p_{\gamma_0})$
  in direction $(0,\tau) \in C^{k+\ell+1}(S^1;\mathbb{R}^2) \times C^k(S^1;\mathbb{R})$ is given as
  \[
  F'(\gamma_0,p_{\gamma_0})(0,\tau) = 2\langle\gamma_0(p_{\gamma_0})-\gamma_0,\dot{\gamma}_0(p_{\gamma_0})\rangle\tau
  =2\sin(\vartheta)\tau\,,
  \]
  which is obviously an isomorphism of $C^k(S^1;\mathbb{R})$, as
  the assumption $R > r/2 > 0$ implies that $\sin(\vartheta) \neq 0$.
  Thus the implicit function theorem on Banach spaces
  (see~\cite[Sec.~I.5]{Lan95}) implies
  the existence of a neighborhood $\mathcal{V} \subset C^{k+\ell+1}(S^1;\mathbb{R}^2)$ of $\gamma_0$
  and unique $\ell$-times continuously Fr\'echet differentiable mappings
  $m$,~$p\colon \mathcal{V} \to \diff^k(S^1)$ satisfying the equations
  $F(\gamma,p_\gamma) = 0 = F(\gamma,m_\gamma)$.
  The formula for the directional derivative of $p$ at $\gamma$
  in direction $\sigma$ now follows from the fact that
  \[
  0 = \partial_\gamma F(\gamma,p_\gamma)(\sigma)
  = 2\langle\gamma(p_\gamma)-\gamma,\sigma(p_\gamma)-\sigma\rangle+
  2p'_\gamma(\sigma) \langle\gamma(p_\gamma)-\gamma,\dot{\gamma}(p_\gamma)\rangle\,,
  \]
  which is a simple application of the chain rule.
  The formula for $m_\gamma'(\sigma)$ can be derived analogously.
\end{proof}

\begin{theorem}\label{Variation_general}
  For each $k \geq 0$ and $\ell\ge 1$ there exists a neighborhood 
  $\mathcal{V} \subset C^{k+\ell+1}(S^1;\mathbb{R}^2)$ of the circle of
  radius $R > r/2$ such that the following hold:
  \begin{enumerate}
  \item   For each $\gamma \in \mathcal{V}$ the circular integral invariant $I_{r}[\gamma]$ can be written as
    \begin{equation} \label{eq:circ_invariant_reformulation}
      I_{r}[\gamma](\varphi)
      =
      \frac{1}{2} \int_{m_{\gamma}}^{p_{\gamma}} \langle \gamma(\psi)-\gamma,\dot{\gamma}(\psi)^{\bot} \rangle d\psi
      +\frac{r^2}{2} \arccos \biggl(\frac{\langle \gamma(p_{\gamma})-\gamma,\gamma(m_{\gamma})-\gamma \rangle}{r^2} \biggr) \;.
    \end{equation}
  \item   The circular integral invariant
    \begin{equation*}
      \begin{aligned}
        I_{r} \colon \mathcal{V}  &\to C^{k}(S^{1};\mathbb{R})\,,\\
        \gamma  &\mapsto I_{r}[\gamma]\,,
      \end{aligned}
    \end{equation*}
    is $\ell$-times continuously Fr{\'e}chet differentiable. Its derivative in direction
    $\sigma\in C^{k+\ell+1}(S^1;\mathbb{R}^2)$ is given by
    \begin{align*}
      &2I'_r[\gamma](\sigma)\\
      &=
      2 \int_{m_{\gamma}}^{p_{\gamma}} \langle \sigma(\psi),\dot{\gamma}(\psi)^{\bot} \rangle d\psi \\
      & \qquad
      - \langle \sigma,\gamma(p_{\gamma})^{\bot}-\gamma(m_{\gamma})^{\bot} \rangle
      + \langle \gamma(p_{\gamma})-\gamma,\sigma(p_{\gamma})^{\bot} \rangle
      - \langle \gamma(m_{\gamma})-\gamma,\sigma(m_{\gamma})^{\bot} \rangle  \\
      & \qquad
      + \langle \gamma(p_{\gamma})-\gamma,\dot{\gamma}(p_{\gamma})^{\bot} \rangle
      \frac{\langle \sigma-\sigma(p_{\gamma}),\gamma(p_{\gamma})-\gamma \rangle}{\langle \dot{\gamma}(p_{\gamma}),\gamma(p_{\gamma})-\gamma \rangle}\\
      & \qquad
      - \langle \gamma(m_{\gamma})-\gamma,\dot{\gamma}(m_{\gamma})^{\bot}\rangle
      \frac{\langle \sigma-\sigma(m_{\gamma}),\gamma(m_{\gamma})-\gamma \rangle}{\langle \dot{\gamma}(m_{\gamma}),\gamma(m_{\gamma})-\gamma \rangle}\\
      & \qquad
      - \frac{r^2}{\sqrt{r^4 - \langle \gamma(p_{\gamma})-\gamma,\gamma(m_{\gamma})-\gamma\rangle^2}}\cdot\\
      & \qquad \qquad
      \cdot\bigg( \!
      \frac{\langle \sigma-\sigma(p_{\gamma}),\gamma(p_{\gamma})-\gamma \rangle}{\langle \dot{\gamma}(p_{\gamma}),\gamma(p_{\gamma})-\gamma \rangle}
      \langle \dot{\gamma}(p_{\gamma}),\gamma(m_{\gamma})-\gamma \rangle \\
      & \qquad \qquad \qquad
      + \langle \sigma(p_{\gamma})-\sigma,\gamma(m_{\gamma})-\gamma \rangle
      + \langle \gamma(p_{\gamma})-\gamma,\sigma(m_{\gamma})-\sigma \rangle\\
      & \qquad \qquad \qquad
      + \frac{\langle \sigma-\sigma(m_{\gamma}),\gamma(m_{\gamma})-\gamma \rangle}{\langle \dot{\gamma}(m_{\gamma}),\gamma(m_{\gamma})-\gamma \rangle}
      \langle \gamma(p_{\gamma})-\gamma,\dot{\gamma}(m_{\gamma})\rangle
      \! \bigg)\;.
    \end{align*}
  \end{enumerate}	
\end{theorem}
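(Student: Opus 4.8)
The plan is to derive first the closed formula~\eqref{eq:circ_invariant_reformulation} by decomposing the region $B_r(\gamma(\varphi))\cap\inn(\gamma)$ into a curvilinear sector and a circular sector, and then to obtain both the $\ell$-fold continuous Fréchet differentiability of $I_r$ and the formula for $I_r'$ from this representation together with Lemmas~\ref{composition} and~\ref{intersection_parameters}. Throughout, $\mathcal{V}$ denotes the neighbourhood furnished by Lemma~\ref{intersection_parameters}, shrunk finitely often as needed.

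\emph{Step 1 (the integral formula).} Shrinking $\mathcal{V}$, one may assume that for every $\gamma\in\mathcal{V}$ and $\varphi\in S^1$ the parameter $\varphi$ lies strictly between $m_\gamma(\varphi)$ and $p_\gamma(\varphi)$ — this holds for the circle $\gamma_0$, where $m_{\gamma_0}(\varphi)=\varphi-\vartheta$ and $p_{\gamma_0}(\varphi)=\varphi+\vartheta$, and thus on a neighbourhood by continuity of $m$ and $p$ — and that $\gamma\cap\overline{B_r(\gamma(\varphi))}$ is the single embedded arc $\gamma([m_\gamma(\varphi),p_\gamma(\varphi)])$. Then $\Omega:=B_r(\gamma(\varphi))\cap\inn(\gamma)$ is a Jordan domain whose positively oriented boundary is the concatenation of this curve arc, traversed from $\gamma(m_\gamma)$ to $\gamma(p_\gamma)$, with the arc of $\partial B_r(\gamma(\varphi))$ joining $\gamma(p_\gamma)$ back to $\gamma(m_\gamma)$ inside $\inn(\gamma)$. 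I would apply Green's theorem in the base-point form $\area(\Omega)=\tfrac12\oint_{\partial\Omega}\langle z-\gamma(\varphi),\dot z^{\bot}\rangle$, which is legitimate because the correction coming from the base point vanishes on a closed curve, and evaluate the two pieces in turn: the curve arc yields exactly the first summand of~\eqref{eq:circ_invariant_reformulation}, while along the circular arc the integrand is constant and the contribution is $\tfrac{r^2}{2}$ times the angle the arc subtends at $\gamma(\varphi)$. Since the two bounding radii have length $r$, that angle equals $\arccos\bigl(\langle\gamma(p_\gamma)-\gamma,\gamma(m_\gamma)-\gamma\rangle/r^2\bigr)$; that it is the non-reflex angle (so that $\arccos$ of the normalized inner product is the correct branch) again holds for $\gamma_0$, where it equals $\pi-\vartheta$, and persists on $\mathcal{V}$ by continuity.

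\emph{Step 2 (differentiability).} It remains to show that the right-hand side of~\eqref{eq:circ_invariant_reformulation}, viewed as a map $\mathcal{V}\to C^k(S^1;\mathbb{R})$, is $\ell$-times continuously Fréchet differentiable; by Step~1 this map coincides with $I_r$. The key building blocks are $\gamma\circ p_\gamma$ and $\gamma\circ m_\gamma$, which are $\ell$-times continuously Fréchet differentiable as maps $\mathcal{V}\to C^k(S^1;\mathbb{R}^2)$ by Lemmas~\ref{composition} and~\ref{intersection_parameters} and the chain rule. The $\arccos$-summand is assembled from these, from $\gamma$, from the bounded bilinear pointwise inner product, and — after shrinking $\mathcal{V}$ so that the argument of $\arccos$ remains in a fixed compact subinterval of $(-1,1)$, as it does for $\gamma_0$ — from the smooth Nemytskii operator induced by $\arccos$ on $C^k$; hence it is $C^\ell$. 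For the integral summand I would split the integrand as $\langle\gamma(\psi),\dot\gamma(\psi)^{\bot}\rangle-\langle\gamma(\varphi),\dot\gamma(\psi)^{\bot}\rangle$. The $\varphi$-independent part integrates to $\mathcal{G}(\gamma)(p_\gamma)-\mathcal{G}(\gamma)(m_\gamma)$ with $\mathcal{G}(\gamma)$ a primitive of $\langle\gamma,\dot\gamma^{\bot}\rangle$; since this has no primitive on $S^1$ in general, one writes $\mathcal{G}(\gamma)=\mathcal{G}_0(\gamma)+A(\gamma)\cdot\mathrm{id}$ with $A(\gamma)=\tfrac1\pi\area(\inn(\gamma))$ and $\mathcal{G}_0(\gamma)\in C^{k+\ell+1}(S^1;\mathbb{R})$, passes to the lifts $\widetilde p_\gamma,\widetilde m_\gamma$ of $p_\gamma,m_\gamma$ near $\varphi\mapsto\varphi\pm\vartheta$ — the non-periodic terms cancelling in the difference — and is left with $\mathcal{G}_0(\gamma)\circ p_\gamma-\mathcal{G}_0(\gamma)\circ m_\gamma+A(\gamma)(\widetilde p_\gamma-\widetilde m_\gamma)$, which is $C^\ell$ by Lemma~\ref{composition}, since $\gamma\mapsto(\mathcal{G}_0(\gamma),A(\gamma))$ is a polynomial, hence smooth, map. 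The remaining part equals $-\langle\gamma,\gamma(p_\gamma)^{\bot}-\gamma(m_\gamma)^{\bot}\rangle$, once more a combination of $\gamma$, $\gamma\circ p_\gamma$, $\gamma\circ m_\gamma$, and bounded linear maps, hence $C^\ell$. Summing, $I_r\colon\mathcal{V}\to C^k$ is $\ell$-times continuously Fréchet differentiable.

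\emph{Step 3 (the derivative formula) and the main obstacle.} The formula for $I_r'[\gamma](\sigma)$ would be obtained by differentiating~\eqref{eq:circ_invariant_reformulation} in direction $\sigma$ with the chain and product rules: differentiating the integrand at frozen endpoints gives, after one integration by parts and the identity $\langle a,b^{\bot}\rangle=-\langle a^{\bot},b\rangle$, the first three lines of the asserted expression; differentiating the moving endpoints produces the two terms carrying the factors $\langle\gamma(p_\gamma)-\gamma,\dot\gamma(p_\gamma)^{\bot}\rangle\,p_\gamma'(\sigma)$ and $-\langle\gamma(m_\gamma)-\gamma,\dot\gamma(m_\gamma)^{\bot}\rangle\,m_\gamma'(\sigma)$, into which the formulas of Lemma~\ref{intersection_parameters} are inserted; and differentiating the $\arccos$-summand, using $\arccos'(t)=-(1-t^2)^{-1/2}$, the algebraic identity $1-\bigl(\langle\gamma(p_\gamma)-\gamma,\gamma(m_\gamma)-\gamma\rangle/r^2\bigr)^2=\bigl(r^4-\langle\gamma(p_\gamma)-\gamma,\gamma(m_\gamma)-\gamma\rangle^2\bigr)/r^4$, and $\partial_\varepsilon|_0\,(\gamma+\varepsilon\sigma)\bigl(p_{\gamma+\varepsilon\sigma}\bigr)=\sigma(p_\gamma)+\dot\gamma(p_\gamma)p_\gamma'(\sigma)$ (and likewise for $m$), yields the last block. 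I expect the main obstacle to lie in Step~1, namely the rigorous verification that $\partial\Omega$ really is the stated two-arc concatenation with the correct orientation and with the circular arc subtending the non-reflex angle, and, in Step~2, the careful treatment of the moving-endpoint integral around the missing global primitive of $\langle\gamma,\dot\gamma^{\bot}\rangle$; once these points are settled, the remaining computations reduce to routine applications of the chain and product rules.
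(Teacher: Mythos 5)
Your proposal is correct and follows essentially the same route as the paper: the paper derives \eqref{eq:circ_invariant_reformulation} geometrically (citing Figure~\ref{fig:sketch}), obtains the $C^\ell$ regularity by a term-by-term inspection via Lemmas~\ref{composition} and~\ref{intersection_parameters}, and then differentiates the two summands with the chain and product rules exactly as you describe. Your Steps 1 and 2 in fact supply more detail than the paper does on the two points it treats as routine (the boundary decomposition with the non-reflex angle, and the non-periodicity of the primitive of $\langle\gamma,\dot\gamma^{\bot}\rangle$ in the moving-endpoint integral), and both elaborations are sound.
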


\begin{proof}
  Under the given assumptions for $R$, $r$ and $\mathcal{V}$, 
  Formula~\eqref{eq:circ_invariant_reformulation}  can be easily deduced from Figure~\ref{fig:sketch}.
  A term by term investigation of Formula~\eqref{eq:circ_invariant_reformulation},
  using Lemma~\ref{composition} and Lemma~\ref{intersection_parameters}, shows that
  $I_r$ is of class $C^\ell$ on $\mathcal{V}$.
  \begin{figure}[H]
    \centering
    \def\svgwidth{0.8\textwidth}
    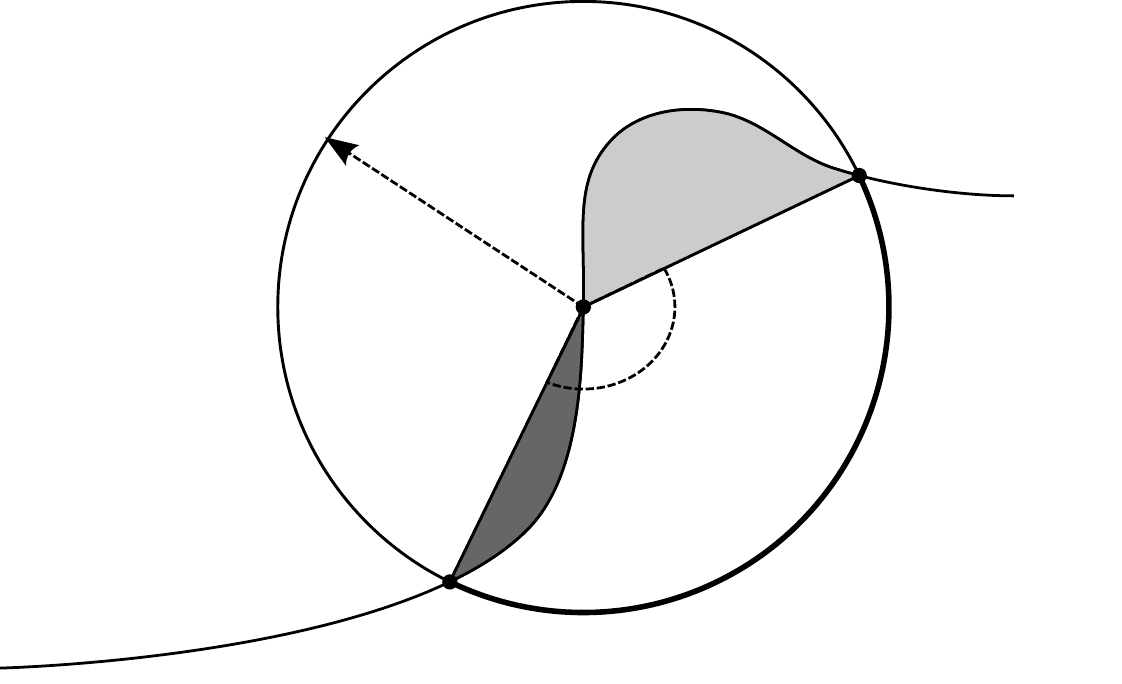
    \caption{Sketch of the derivation of the analytical formula for the circular integral invariant
      assuming two points of intersection.}
    \label{fig:sketch}
  \end{figure}
  
  To calculate the differential of $I_r$ we treat the two terms of Formula~\eqref{eq:circ_invariant_reformulation} separately.
  For the first term we obtain
  \begin{align*}
    \partial_{\gamma}
    &
    \int_{m_{\gamma}}^{p_{\gamma}}
    \langle \gamma(\psi)-\gamma,\dot{\gamma}(\psi)^{\bot} \rangle
    d\psi \\
    &=
    \int_{m_{\gamma}}^{p_{\gamma}}
    \langle \sigma(\psi)-\sigma,\dot{\gamma}(\psi)^{\bot} \rangle
    + \langle \gamma(\psi)-\gamma,\dot{\sigma}(\psi)^{\bot} \rangle
    d\psi\\
    & \qquad
    + \langle \gamma(p_{\gamma})-\gamma,\dot{\gamma}(p_{\gamma})^{\bot} \rangle
    p'_{\gamma}(\sigma)
    - \langle \gamma(m_{\gamma})-\gamma,\dot{\gamma}(m_{\gamma})^{\bot} \rangle
    m'_{\gamma}(\sigma)\\
    &=
    \int_{m_{\gamma}}^{p_{\gamma}} \langle \sigma(\psi)-\sigma,\dot{\gamma}(\psi)^{\bot} \rangle d\psi
    - \int_{m_{\gamma}}^{p_{\gamma}} \langle \dot{\gamma}(\psi),\sigma(\psi)^{\bot} \rangle d\psi \\
    & \qquad
    + \langle \gamma(p_{\gamma})-\gamma,\sigma(p_{\gamma})^{\bot} \rangle
    - \langle \gamma(m_{\gamma})-\gamma,\sigma(m_{\gamma})^{\bot} \rangle\\
    & \qquad
    + \langle \gamma(p_{\gamma})-\gamma,\dot{\gamma}(p_{\gamma})^{\bot} \rangle p'_{\gamma}(\sigma)
    - \langle \gamma(m_{\gamma})-\gamma,\dot{\gamma}(m_{\gamma})^{\bot} \rangle m'_{\gamma}(\sigma)\\
    &=
    \int_{m_{\gamma}}^{p_{\gamma}} 2 \langle \sigma(\psi),\dot{\gamma}(\psi)^{\bot} \rangle d\psi
    - \langle \sigma,\gamma(p_{\gamma})^{\bot}-\gamma(m_{\gamma})^{\bot} \rangle\\
    & \qquad
    + \langle \gamma(p_{\gamma})-\gamma,\sigma(p_{\gamma})^{\bot} \rangle
    - \langle \gamma(m_{\gamma})-\gamma,\sigma(m_{\gamma})^{\bot} \rangle\\
    & \qquad
    + \langle \gamma(p_{\gamma})-\gamma,\dot{\gamma}(p_{\gamma})^{\bot} \rangle
    p'_{\gamma}(\sigma)
    - \langle \gamma(m_{\gamma})-\gamma,\dot{\gamma}(m_{\gamma})^{\bot} \rangle
    m'_{\gamma}(\sigma) \;.
  \end{align*}
  A simple application of the chain rule yields for the second term
  \begin{align*}
    &-\partial_{\gamma} \frac{r^2}{2} \arccos
    \bigg( \!
    \frac{\langle \gamma(p_{\gamma})-\gamma,
      \gamma(m_{\gamma})-\gamma\rangle
    }{r^2}
    \! \bigg)\\
    &=
    \frac{\langle \dot{\gamma}(p_{\gamma}) p'_{\gamma}(\sigma) + \sigma(p_{\gamma}) - \sigma,
      \gamma(m_{\gamma})-\gamma
      \rangle
      +
      \langle \gamma(p_{\gamma})-\gamma,
      \dot{\gamma}(m_{\gamma}) m'_{\gamma}(\sigma) + \sigma(m_{\gamma}) - \sigma
      \rangle}
	 {2r^{-2} \sqrt{r^4 - \langle \gamma(p_{\gamma})-\gamma,\gamma(m_{\gamma})-\gamma \rangle^2}} \,.
  \end{align*}
  Using the formulas for the intersection parameters, we obtain the desired result.
\end{proof}

In the special case where $\gamma$ equals the unit circle the lemma above reduces to:

\begin{lemma}\label{FormelKreis}
  Let $\gamma \in C^{k+\ell+1}(S^1;\mathbb{R}^2)$, $k \geq 0$, $\ell \ge 1$,
  be the constant speed parameterized unit circle, that is,
  \begin{equation*}
    \gamma(\varphi) = \big( \!\cos(\varphi),\sin(\varphi) \big) \;,
  \end{equation*}
  and let $r<2$. Then the derivative of $I_{r}[\gamma]$ in direction $\sigma \in C^{k+\ell+1}(S^1;\mathbb{R}^2)$ with
  \begin{equation*}
    \sigma(\varphi) = a(\varphi) \dot{\gamma}(\varphi)^{\bot} +b(\varphi) \dot{\gamma}(\varphi)
  \end{equation*}
  is given by
  \begin{equation*}
    I'_{r}[\gamma](\sigma)(\varphi)
    =
    \int_{\varphi-\vartheta}^{\varphi+\vartheta} a(\psi) \, d\psi - 2 \sin(\vartheta) a(\varphi)
    =
    \big( \chi_{[-\vartheta,\vartheta]} \ast a \big)(\varphi) - 2 \sin(\vartheta) a(\varphi)
  \end{equation*}
  with
  \begin{equation*}
    \vartheta := \arccos \bigg( \! 1 - \frac{r^2}{2} \bigg) \;.
  \end{equation*}
\end{lemma}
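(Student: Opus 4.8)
The idea is to evaluate the general differentiation formula of Theorem~\ref{Variation_general} at the unit circle and to reduce the outcome by elementary trigonometry. Since $r<2$, the unit circle is the centre of the neighbourhood $\mathcal{V}$ of Theorem~\ref{Variation_general} (taken with $R=1$), so that formula applies. For $\gamma(\varphi)=(\cos\varphi,\sin\varphi)$ one has $c_\gamma=1$, $\dot\gamma(\varphi)=(-\sin\varphi,\cos\varphi)$, $\dot\gamma(\varphi)^\bot=\gamma(\varphi)$ (the outward radial direction), and, by the proof of Lemma~\ref{intersection_parameters}, $p_\gamma(\varphi)=\varphi+\vartheta$ and $m_\gamma(\varphi)=\varphi-\vartheta$ with $\cos\vartheta=1-r^2/2$, that is, $r^2=2(1-\cos\vartheta)$. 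Writing $\sigma=a\dot\gamma^\bot+b\dot\gamma$ therefore means $\sigma(\psi)=a(\psi)\gamma(\psi)+b(\psi)\dot\gamma(\psi)$. Using only $\langle\gamma(\alpha),\gamma(\beta)\rangle=\cos(\alpha-\beta)$ and $\langle\gamma(\alpha),\dot\gamma(\beta)\rangle=\sin(\alpha-\beta)$ together with bilinearity, every scalar product occurring in the formula for $2I_r'[\gamma](\sigma)$ becomes an expression in $\cos\vartheta$, $\sin\vartheta$ and the six numbers $a(\varphi),a(\varphi\pm\vartheta),b(\varphi),b(\varphi\pm\vartheta)$; in particular $\langle\dot\gamma(p_\gamma),\gamma(p_\gamma)-\gamma\rangle=\sin\vartheta$, $\langle\dot\gamma(m_\gamma),\gamma(m_\gamma)-\gamma\rangle=-\sin\vartheta$ and $\langle\gamma(p_\gamma)-\gamma,\gamma(m_\gamma)-\gamma\rangle=-r^2\cos\vartheta$, so that $\sqrt{r^4-\langle\ldots\rangle^2}=r^2\sin\vartheta$ and the prefactor of the last block of the formula collapses to $-1/\sin\vartheta$. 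The only term not reducing to those six numbers is the ``bulk'' integral $2\int_{m_\gamma}^{p_\gamma}\langle\sigma(\psi),\dot\gamma(\psi)^\bot\rangle\,d\psi=2\int_{\varphi-\vartheta}^{\varphi+\vartheta}a(\psi)\,d\psi$.

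After this substitution I would collect everything according to the coefficients of $b(\varphi),b(\varphi\pm\vartheta)$ and of $a(\varphi),a(\varphi\pm\vartheta)$, and two cancellations must be verified. First, all coefficients of $b$ vanish identically: tangential deformations do not affect $I_r$ to first order at the circle (which, at the circle, reflects the reparametrization equivariance of $I_r$ together with $I_r[\gamma]$ being constant there). Second, the contributions of $a(\varphi+\vartheta)$ and $a(\varphi-\vartheta)$ --- arising from the differentiated limits of integration, from $p_\gamma'(\sigma)$ and $m_\gamma'(\sigma)$, and from the derivative of the $\arccos$-term --- cancel as well, once one uses $\cos(2\vartheta)=2\cos^2\vartheta-1$ and $\sin^2\vartheta=(1-\cos\vartheta)(1+\cos\vartheta)$. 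What remains is the bulk integral together with the coefficient of $a(\varphi)$, which after these simplifications equals $-4\sin\vartheta$; dividing by $2$ gives $I_r'[\gamma](\sigma)(\varphi)=\int_{\varphi-\vartheta}^{\varphi+\vartheta}a(\psi)\,d\psi-2\sin\vartheta\,a(\varphi)$, and the convolution form follows at once from the change of variables $\psi=\varphi+u$ and the symmetry of $[-\vartheta,\vartheta]$.

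There is no deep idea here, so the only real obstacle is the amount of bookkeeping: the formula of Theorem~\ref{Variation_general} has roughly a dozen terms and none of the cancellations is visible term by term. I would keep this under control by first computing, once and for all, the handful of elementary scalar products above together with the ``mixed'' ones $\langle\dot\gamma(p_\gamma),\gamma(m_\gamma)-\gamma\rangle=\sin\vartheta-\sin(2\vartheta)$ and $\langle\gamma(p_\gamma)-\gamma,\dot\gamma(m_\gamma)\rangle=\sin(2\vartheta)-\sin\vartheta$, and then substituting mechanically. As an independent check --- and an alternative route to the lemma --- one can also read the formula off directly from the definition of $I_r$: splitting the first variation of $\area\bigl(B_r(\gamma(\varphi))\cap\inn(\gamma)\bigr)$ into the part due to moving the centre $\gamma(\varphi)$ of the ball and the part due to moving the curve $\gamma$ produces exactly the term $-2\sin\vartheta\,a(\varphi)$ (the flux of the constant velocity $\sigma(\varphi)$ through the arc of $\partial B_r$ lying inside the disc, an arc of central angle $\pi-\vartheta$) and the term $\int_{\varphi-\vartheta}^{\varphi+\vartheta}a(\psi)\,d\psi$ (the integral of the outward normal velocity $\langle\sigma(\psi),\dot\gamma(\psi)^\bot\rangle=a(\psi)$ over the arc of $\gamma$ inside $B_r$).
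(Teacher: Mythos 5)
Your proposal is correct and follows essentially the same route as the paper's appendix proof: substitute the unit circle into the general formula of Theorem~\ref{Variation_general}, tabulate the elementary inner products (your values, e.g.\ $\langle\dot\gamma(p_\gamma),\gamma(p_\gamma)-\gamma\rangle=\sin\vartheta$, $\langle\gamma(p_\gamma)-\gamma,\gamma(m_\gamma)-\gamma\rangle=-2\cos\vartheta(1-\cos\vartheta)$, and the prefactor $-1/\sin\vartheta$, all match the paper's table), and verify that the $b$-terms and the $a(\varphi\pm\vartheta)$-terms cancel, leaving the bulk integral and $-2\sin\vartheta\,a(\varphi)$. The only differences are organizational (the paper splits the computation into the normal direction $a\dot\gamma^{\bot}$ and the tangential direction $b\dot\gamma$ rather than collecting coefficients of the six point values) plus your added geometric sanity check, neither of which changes the argument.
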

The proof of this lemma is postponed to the appendix.

\section{Proof of the Main Theorem}

\begin{proof}
  We have already shown in Section~\ref{se:diff} that
  $I_r$ is of class $C^\ell$.

  We now show the local injectivity of $I_r'$.
  Without loss of generality we may assume that $R = 1$.
  Let $\mathcal{V}$ be the neighborhood of the unit circle
  defined in Theorem~\ref{Variation_general}.
  The formula for $I_r'$ implies that
  for every $\gamma \in \mathcal{V} \subset C^{k+\ell+1}(S^1;\mathbb{R}^2)$
  the mapping $I_r'[\gamma]$ is bounded as a mapping
  from $C^k(S^1;\mathbb{R}^2)$ to $C^k(S^1;\mathbb{R})$.
  Thus, $I_r'[\gamma]$ has a unique bounded extension
  $J_r[\gamma] \colon C^k(S^1;\mathbb{R}^2) \to C^k(S^1;\mathbb{R})$.
  Moreover, the mapping $J_r$
  is continuous with respect to the $C^{k+\ell+1}$-topology
  seen as a mapping from $\mathcal{V}$ to $L\bigl(C^k(S^1;\mathbb{R}^2),C^k(S^1;\mathbb{R})\bigr)$.
  In addition, we denote for $\gamma \in \mathcal{V}$ by $\tilde{J}_r[\gamma]$
  the restriction of $J_r[\gamma]$ to $T_\gamma\mathcal{C}_k$.
  
  Denote now by $\gamma_0\colon S^1 \to \mathbb{R}^2$ the
  constant speed parameterized unit circle.
  Define for given
  $\sigma \in T_{\gamma_0}\mathcal{C}_{k}$ the function $A\sigma \colon S^1\to \mathbb{R}$ by
  \begin{equation*}
    A \sigma(\varphi) := \langle \sigma(\varphi),\dot{\gamma}_0(\varphi)^{\bot} \rangle \;.
  \end{equation*}
  Because $\gamma_0$ is a $C^\infty$-curve, it follows that $A\sigma$ is $C^{k}$. Using Lemma
  \ref{lem:tangent_space_characterization} it follows that $A \sigma(0) = 0$ and $\partial_{\varphi}(A \sigma)(0) = 0$.
  Define now the space
  \[
  \mathcal{A}^{k}(S^1;\mathbb{R}) :=
  \bigl\{ a \in C^k(S^1;\mathbb{R}) : a(0) = \dot{a}(0) = 0\bigr\}.
  \]
  Then it follows that $A$ is a bounded linear mapping from $T_{\gamma_0} \mathcal{C}_{k}$ to $\mathcal{A}^k(S^{1};\mathbb{R})$.
  
  In addition, it follows from Lemma \ref{lem:tangent_space_characterization} that $A$ is boundedly invertible
  with $A^{-1}$ given by
  \begin{equation*}
    A^{-1} a = a \dot{\gamma}_0^{\bot} + b \dot{\gamma}_0
  \end{equation*}
  with
  \begin{equation*}
    b(\varphi) = \dot{b}(0) \varphi + \int_{0}^{\varphi} a(\tau) \, d\tau
    \quad \text{ and } \quad
    \dot{b}(0) = \frac{1}{2\pi} \int_{0}^{2\pi} a(\tau) \, d\tau \;.
  \end{equation*}
  The expression for $\dot{b}(0)$ is due to the periodicity of $b$, which implies that
  \begin{equation*}
    0 = b(0) = b(2\pi) = 2\pi \dot{b}(0) + \int_{0}^{2\pi} a(\tau) \, d\tau \; .
  \end{equation*}
  Therefore $A$ is in fact an isomorphism between $T_{\gamma_0} \mathcal{C}_{k}$ and $\mathcal{A}^k(S^{1};\mathbb{R})$.
  
  According to Lemma~\ref{FormelKreis}, the mapping $\tilde{J}_r[\gamma_0]$
  evaluated at
  $\sigma = a \dot{\gamma}_0^{\bot} + b \dot{\gamma}_0 \in T_{\gamma_0}\mathcal{C}_k$ can be written as
  \begin{equation*}
    \tilde{J}_{r}[\gamma_0](\sigma) = \chi_{[-\vartheta,\vartheta]} \ast a - 2 \sin(\vartheta) a \; .
  \end{equation*}
  Thus $\tilde{J}_r[\gamma_0]$ can be decomposed into
  \begin{equation*}
    \tilde{J}_{r}[\gamma_0] = B \circ \imath \circ A \;,
  \end{equation*}
  where the operator $B \colon C^{k}(S^{1};\mathbb{R}) \to C^{k}(S^{1};\mathbb{R})$ is given by
  \begin{equation*}
    B a = \chi_{[-\vartheta,\vartheta]} \ast a - 2 \sin(\vartheta) a
  \end{equation*}
  and $\imath$ is the embedding from $\mathcal{A}^k(S^{1};\mathbb{R})$ into $C^{k}(S^{1};\mathbb{R})$.
  Lemma~\ref{le:compact} (see Appendix) implies that the mapping
  $\sigma \mapsto \chi_{[-\vartheta,\vartheta]}\ast a$ is compact and thus $B$ is a compact
  perturbation of the identity. Therefore the Riesz--Schauder theory (see~\cite[Chap.~X.5]{Yos95}) implies that
  $B$ has a closed range.
  
  Next we compute the kernel of $B$. To that end we consider the mapping in the Fourier basis. A short calculation
  shows that in this basis the operator $B$ is the diagonal operator that maps a sequence of (complex) Fourier
  coefficients $(c_{j})_{j \in \mathbb{Z}}$ to the sequence $(d_{j} c_{j})_{j\in\mathbb{Z}}$,
  where
  \begin{equation*}
    d_j = \begin{cases}
      2 \bigl( 1 - \sin(\vartheta) \bigr) & \text{ if } j = 0 \,,\\
      0 & \text{ if } j = \pm 1 \,,\\
      \Bigl[ 2 \frac{\sin(j\vartheta)}{j} - 2 \sin(\vartheta) \Bigr] & \text{ else.}
    \end{cases}
  \end{equation*}
  Because $\sin(\vartheta) \neq 1$ and $\sin(j \vartheta) \neq j \sin(\vartheta)$ whenever
  $j \in \mathbb{Z} \setminus \{-1,0,1\}$ (see Lemma \ref{lem:sine_inequality} in the Appendix), it follows that
  the kernel of $B$ consists of the functions $a$ of the form $a(\varphi) = c_{-1} \exp(-i\varphi) + c_{1} \exp(i\varphi)$
  for some $c_{-1}$, $c_{1} \in \mathbb{C}$.
  
  In the next step we show that the kernel of $\tilde{J}_r[\gamma_0] = B \circ \imath \circ A$ is trivial. Therefore assume that
  $a = c_{-1} \exp(-i\cdot) + c_{1} \exp(i\cdot) \in \mathcal{A}^k(S^{1};\mathbb{R}) \cap \kernel B$. Because $a(0) = 0$,
  it follows that $c_{-1} + c_{1} = 0$; because $\dot{a}(0) = 0$, it follows that $-c_{-1} + c_{1} = 0$. Together, this
  shows that $c_{-1} = c_{1} = 0$, implying that the intersection of $\kernel B$ with $\mathcal{A}^k(S^{1};\mathbb{R})$ is
  trivial. Since $A$ is an isomorphism this proves the injectivity of $\tilde{J}_{r}[\gamma_0]$.
  
  We have thus shown that $\tilde{J}_{r}[\gamma_0] = B \circ \imath \circ A$ is strongly closed and injective.
  Now note that the set of strongly closed and injective, bounded linear functionals
  between two Banach spaces $X$ and $Y$ is open with respect to the
  norm topology on $L(X,Y)$.
  Because of the continuity of $J_r$ and therefore $\tilde{J}_r$,
  this proves the existence of a neighborhood $\gamma_0 \in \mathcal{U} \subset \mathcal{C}_{k+2}$
  such that $\tilde{J}_r[\gamma]$ is injective for every $\gamma \in \mathcal{U}$.
  In particular, this proves the injectivity of $I_r'[\gamma]$ for every $\gamma\in\mathcal{U}$.

  For proving the local injectivity, let $\gamma$, $\tilde{\gamma} \in \mathcal{C}_{k+6} \cap \mathcal{U}$.
  Because the mapping $I_r\colon \mathcal{V}\subset C^{k+3}(S^1;\mathbb{R}^2) \to C^k(S^1;\mathbb{R})$
  is of class $C^2$, it has a Taylor expansion of the form
  \[
  I_r[\tilde{\gamma}] = I_r[\gamma] + I_r'[\gamma](\tilde{\gamma}-\gamma)
  + \int_0^1(1-t)I_r''[\gamma+t(\tilde{\gamma}-\gamma)](\tilde{\gamma}-\gamma,\tilde{\gamma}-\gamma)\,dt.
  \]
  Thus
  \[
  \begin{aligned}
    \lVert I_r[\tilde{\gamma}]-I_r[\gamma]\rVert_k
    &\ge \lVert I_r'[\gamma](\tilde{\gamma}-\gamma)\rVert_k
    - \Bigl\rVert\int_0^1(1-t)I_r''[\gamma+t(\tilde{\gamma}-\gamma)](\tilde{\gamma}-\gamma,\tilde{\gamma}-\gamma)\,dt\Bigr\rVert_k\\
    &\ge \lVert I_r'[\gamma](\tilde{\gamma}-\gamma)\rVert_k
    - \int_0^1\Bigl\rVert I_r''[\gamma+t(\tilde{\gamma}-\gamma)](\tilde{\gamma}-\gamma,\tilde{\gamma}-\gamma)\Bigr\rVert_k\,dt
  \end{aligned}
  \]
  Because $I_r\colon C^{k+3}\to C^k$ is of class $C^2$, it follows that 
  $I_r''\colon C^{k+3} \to L^2(C^{k+3},C^k)$ is continuous.
  Thus there exists a convex neighborhood $\mathcal{V}_1$ of the circle with
  respect to the $C^{k+3}$-norm and a constant $c_1$ such that
  $\lVert I_r''[\widehat{\gamma}]\rVert_{L^2(C^{k+3},C^k)} \le c_1$ for every $\widehat{\gamma} \in \mathcal{V}_1$.
  Consequently,
  \begin{equation}\label{eq:1}
    \lVert I_r[\tilde{\gamma}]-I_r[\gamma]\rVert_k
    \ge \lVert I_r'[\gamma](\tilde{\gamma}-\gamma)\rVert_k - c_1 \lVert \tilde{\gamma}-\gamma\rVert_{k+3}^2
  \end{equation}
  for every $\gamma$, $\tilde{\gamma} \in \mathcal{V}_1$.
  
  Since $I_r'\colon C^{k+3} \to L(C^{k+3},C^k)$ is continuous,
  for every $\varepsilon > 0$  there exists
  a neighbourhood $\mathcal{V}_\varepsilon$ of the
  constant speed parameterized unit circle $\gamma_0$
  such that $\lVert I_r'[\gamma] - I_r'[\gamma_0]\rVert < \varepsilon$
  for every $\gamma \in \mathcal{V}_\varepsilon$.
  In particular, we have for $\gamma \in \mathcal{V}_\varepsilon$
  \begin{equation}\label{eq:2}
  \lVert I_r'[\gamma](\tilde{\gamma}-\gamma)\rVert_k
  \ge \lVert I_r'[\gamma_0](\tilde{\gamma}-\gamma)\rVert_k
  - \varepsilon\lVert \tilde{\gamma}-\gamma\rVert_k.
  \end{equation}

  Since $\mathcal{C}_{k}$ is a smooth submanifold of $C^{k}(S^1;\mathbb{R}^2)$,
  there exists a neighborhood $\mathcal{V}_2 \subset C^k(S^1;\mathbb{R}^2)$ of $\gamma_0$ and
  a smooth diffeomorphism 
  $\Phi\colon \mathcal{V}_2 \to \Phi(\mathcal{V}_2)\subset C^k(S^1;\mathbb{R}^2)$
  such that 
  \[
  \Phi[\gamma_0] = 0,
  \qquad
  \Phi(\mathcal{V}_2 \cap \mathcal{C}_k) \subset T_{\gamma_0}\mathcal{C}_k,
  \quad\text{ and }\quad
  \Phi'[\gamma_0] = \id.
  \]
  In order to show that such a map exists,
  let $\Psi\colon\mathcal{V}_2\subset C^k(S^1;\mathbb{R}^2) \to C^k(S^1;\mathbb{R}^2)$ be
  any submanifold chart centered at $\gamma_0$.
  That is, there exists a closed linear subspace $\mathcal{E} \subset C^k(S^1;\mathbb{R}^2)$
  such that $\Psi(\mathcal{V}_2 \cap \mathcal{C}_k) = \Psi(\mathcal{V}_2) \cap \mathcal{E}$.
  Then the mapping $\Phi := \Psi'[\gamma_0]^{-1} \circ \Psi$ has the desired properties.

  Thus the continuous invertibility of $I_r'[\gamma_0]$ on $T_{\gamma_0}\mathcal{C}_k$
  seen as a mapping from $C^k$ to $C^k$ implies that
  there exists $c_2 > 0$ such that for every $\gamma$, $\tilde{\gamma} \in \mathcal{V}_2 \cap \mathcal{C}_k$
  we have
  \begin{equation}\label{eq:7}
  \begin{aligned}
    \lVert I_r'[\gamma_0](\tilde{\gamma}-\gamma)\rVert_k
    &\ge \lVert I_r'[\gamma_0](\Phi(\tilde{\gamma})-\Phi(\gamma))\rVert_k-
    \lVert I_r'[\gamma_0](\tilde{\gamma}-\gamma-\Phi(\tilde{\gamma})+\Phi(\gamma))\rVert_k\\
    &\ge c_2 \lVert \Phi(\tilde{\gamma})-\Phi(\gamma)\rVert_k
    - c_3 \lVert \tilde{\gamma}-\gamma-\Phi(\tilde{\gamma})+\Phi(\gamma)\rVert_k
  \end{aligned}
  \end{equation}
  with $c_3 := \lVert I_r'[\gamma_0]\rVert_{k+3,k}$.

  Developing $\Phi(\tilde{\gamma})$ in a Taylor expansion
  at $\gamma$, we obtain after possibly choosing a smaller neighbourhood
  \[
  \Phi(\tilde{\gamma}) = \Phi(\gamma) + \Phi'[\gamma](\tilde{\gamma}-\gamma) + R(\gamma,\tilde{\gamma})
  \]
  with
  \[
  \lVert R(\gamma,\tilde{\gamma})\rVert_k \le c_4 \lVert \gamma-\tilde{\gamma}\rVert_k^2
  \]
  for some $c_4 > 0$ independent of $\gamma$.
  Inserting this Taylor expansion into~\eqref{eq:7}
  yields
  \[
  \begin{aligned}
    \lVert I_r'[\gamma_0](\tilde{\gamma}-\gamma)\rVert_k
    &\ge c_2\lVert \Phi'[\gamma](\tilde{\gamma}-\gamma)\rVert_k - 
      c_3\lVert (\id-\Phi'[\gamma])(\tilde{\gamma}-\gamma)\rVert_k\\
    &\qquad\qquad
      - (c_2+c_3)\lVert R(\gamma,\tilde{\gamma})\rVert_k\\
    &\ge c_2\lVert \tilde{\gamma}-\gamma\rVert_k
      - (c_2+c_3)\lVert (\id-\Phi'[\gamma])(\tilde{\gamma}-\gamma)\rVert_k\\
    &\qquad\qquad
      - c_4(c_2+c_3)\lVert \gamma-\tilde{\gamma}\rVert_k^2\\
    &\ge \bigl(c_2- c_5\lVert \id-\Phi'[\gamma]\rVert_{k,k} - c_6\lVert \gamma-\tilde{\gamma}\rVert_k\bigr)
      \lVert \gamma-\tilde{\gamma}\rVert_k.
  \end{aligned}
  \]
  Because $\Phi$ is smooth and $\Phi'[\gamma_0] = \id$,
  it follows that there exists a neighbourhood $\mathcal{V}_3$ of $\gamma_0$
  such that
  \begin{equation}\label{eq:3}
  \lVert I_r'[\gamma_0](\tilde{\gamma}-\gamma)\rVert_k \ge c_7 \lVert \gamma-\tilde{\gamma}\rVert_k
  \end{equation}
  for every $\gamma$, $\tilde{\gamma} \in \mathcal{V}_3 \cap \mathcal{C}_k$.

  Collecting the inequalities~\eqref{eq:1}, \eqref{eq:2}, and~\eqref{eq:3},
  we obtain
  \begin{equation}\label{eq:4}
  \lVert I_r[\tilde{\gamma}]-I_r[\gamma]\rVert_k
  \ge (c_7-\varepsilon)\lVert \gamma-\tilde{\gamma}\rVert_k - c_1\lVert\gamma-\tilde{\gamma}\rVert_{k+3}^2
  \end{equation}
  for every $\gamma$, 
  $\tilde{\gamma} \in \mathcal{V}_1 \cap \mathcal{V}_\varepsilon \cap \mathcal{V}_2 \cap \mathcal{C}_{k+3}$.

  In order to obtain the desired result, we use the interpolation inequality
  (see~\cite[Theorem~2.2.1, p.~143]{Ham82})
  \[
  \lVert\gamma-\tilde{\gamma} \rVert_{k+3}^2 
  \le c_8\lVert\gamma-\tilde{\gamma}\rVert_k \lVert\gamma-\tilde{\gamma}\rVert_{k+6}.
  \]

  Choosing $\varepsilon > 0$ in~\eqref{eq:4} sufficiently small,
  we obtain the estimate
  \[
  \lVert I_r[\tilde{\gamma}]-I_r[\gamma]\rVert_k
  \ge (c_9-c_8\lVert\gamma-\tilde{\gamma}\rVert_{k+6})\lVert\gamma-\tilde{\gamma}\rVert_k
  \]
  for every $\gamma$, $\tilde{\gamma} \in \mathcal{V}_1 \cap \mathcal{V}_\varepsilon \cap \mathcal{V}_2 \cap \mathcal{C}_{k+6}$.
  Thus there exists a neighbourhood $\tilde{\mathcal{U}} \subset \mathcal{C}_{k+6}$
  of the circle $\gamma_0$ and a constant $C > 0$ such that
  \[
  \lVert I_r[\tilde{\gamma}]-I_r[\gamma]\rVert_k
  \ge C\lVert\gamma-\tilde{\gamma}\rVert_k
  \]
  for every $\gamma$, $\tilde{\gamma} \in \tilde{\mathcal{U}}$.
\end{proof}

\section{Conclusion}

In this article, we have shown an injectivity result for the circular integral
invariant on a $C^{k+6}$ neighborhood $\tilde{\mathcal{U}}$ of the circle.
Note, however, that the derived result does not prove the 
continuous invertibility of the invariant on $\tilde{\mathcal{U}}$.

The classical approach for proving such a result
would be the usage of the inverse function theorem.
To that end, however, we would require
that the mapping $I_r$ was continuously Fr\'echet differentiable and its derivative
$I_r'$ an isomorphism of the corresponding tangent spaces.
Although our results prove that $I_r'$ can be extended to an isomorphism $\tilde{J}_r$ of $T_{\gamma_0}\mathcal{C}_k$,
we cannot use the inverse function theorem, as the mapping $I_r$ is not
Fr\'echet differentiable (and not even G\^ateaux differentiable)
from $C^k$ to $C^k$ --- in fact, our results do not even prove
that $I_r$ maps $C^k$ curves into $C^k$ integral invariants.
Conversely, seen as a mapping from $C^{k+2}$ to $C^k$,
the tangent mapping, though injective, cannot be a surjection near the circle.

Another issue are the rather stringent smoothness assumptions.
We believe that it is possible to relax these assumptions,
as it seems probable that $I_r$ is twice weakly differentiable
as a mapping from $C^{k+2}$ to $C^k$, which would indicate
a possible uniqueness result on a $C^{k+4}$-neighborhood of the circle.

\section*{Acknowledgment}

The authors want to thank Peter Michor and G{\"u}nther H{\"o}r\-mann for their comments and suggestions, which helped
to improve the article.
We thank the referee for the careful reading of the article and for pointing
out a mistake in the proof of the original main theorem.

\section{Appendix}

\begin{lemma}\label{le:compact}
  For every $k \ge 0$,
  the mapping $a \mapsto K_\vartheta a := \chi_{[-\vartheta,\vartheta]} \ast a$ is compact as a mapping from
  $C^{k}(S^{1};\mathbb{R})$ to $C^{k}(S^{1};\mathbb{R})$.
\end{lemma}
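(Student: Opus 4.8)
The plan is to exploit the smoothing property of convolution with the indicator function $\chi_{[-\vartheta,\vartheta]}$: I will show that $K_\vartheta$ is bounded as a mapping from $C^k(S^1;\mathbb{R})$ into $C^{k+1}(S^1;\mathbb{R})$, and then obtain compactness by factoring $K_\vartheta$ through the compact inclusion $C^{k+1}(S^1;\mathbb{R}) \hookrightarrow C^k(S^1;\mathbb{R})$.

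First I would write out the convolution on $S^1 \cong [0,2\pi)$ explicitly, obtaining
\[
(K_\vartheta a)(\varphi) = \int_{\varphi-\vartheta}^{\varphi+\vartheta} a(\psi)\,d\psi,
\]
and differentiate using the fundamental theorem of calculus to get $(K_\vartheta a)'(\varphi) = a(\varphi+\vartheta) - a(\varphi-\vartheta)$. Since the shifted functions $a(\cdot\pm\vartheta)$ lie in $C^k(S^1;\mathbb{R})$ whenever $a$ does, this identity shows $(K_\vartheta a)' \in C^k$, hence $K_\vartheta a \in C^{k+1}$. Applying the same identity to the derivatives $a^{(j)}$ for $0 \le j \le k$, together with the trivial bound $\lVert K_\vartheta a\rVert_0 \le 2\vartheta\lVert a\rVert_0$, yields a constant $C$ depending only on $k$ and $\vartheta$ with $\lVert K_\vartheta a\rVert_{k+1} \le C\lVert a\rVert_k$; that is, $K_\vartheta\colon C^k(S^1;\mathbb{R}) \to C^{k+1}(S^1;\mathbb{R})$ is bounded.

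Next I would invoke the Arzel\`a--Ascoli theorem: a bounded subset of $C^{k+1}(S^1;\mathbb{R})$ has all derivatives up to order $k$ uniformly bounded and equicontinuous (the equicontinuity coming from the uniform bound on the $(k+1)$-st derivatives), hence is relatively compact in $C^k(S^1;\mathbb{R})$, so the inclusion $C^{k+1}(S^1;\mathbb{R}) \hookrightarrow C^k(S^1;\mathbb{R})$ is a compact operator. Writing $K_\vartheta$ as the composition of the bounded operator $C^k \to C^{k+1}$ from the previous step with this compact inclusion, and using that the composition of a bounded operator with a compact operator is compact, completes the proof. The only real content is the one-derivative smoothing observation; the remaining ingredients are entirely standard, so I do not expect a genuine obstacle. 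One need only note that the argument goes through for arbitrary $\vartheta$, the convolution being understood periodically (for $\vartheta \ge \pi$ the operator $K_\vartheta$ even has rank one).
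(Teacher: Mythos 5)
Your proof is correct and rests on the same key observation as the paper's, namely the identity $\partial_\varphi(K_\vartheta a) = a(\cdot+\vartheta) - a(\cdot-\vartheta)$ showing that convolution with $\chi_{[-\vartheta,\vartheta]}$ gains one derivative, combined with the Arzel\`a--Ascoli theorem. The only difference is organizational: you package the Arzel\`a--Ascoli step as the compactness of the embedding $C^{k+1}(S^1;\mathbb{R}) \hookrightarrow C^{k}(S^1;\mathbb{R})$ and factor $K_\vartheta$ through it, whereas the paper verifies boundedness and equicontinuity of $K_\vartheta(B_1)$ directly.
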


\begin{proof}
  Denoting by $B_{1}$ the unit ball in $C^{k}(S^{1};\mathbb{R})$, we have to show that the image of $B_{1}$ under
  $K_{\vartheta}$ is precompact in $C^{k}(S^{1};\mathbb{R})$. Applying the Arzel\`a--Ascoli Theorem
  (see~\cite[Chap.~III.3]{Yos95}), we have to show that $K_{\vartheta}(B_{1})$ is bounded and the
  first $k$ derivatives
  of the functions in $K_{\vartheta}(B_{1})$ are equicontinuous. The boundedness of $K_{\vartheta}(B_{1})$ is obvious,
  $K_{\vartheta}$ being a bounded linear mapping (of norm $2 \vartheta$). Now assume that $a \in B_{1}$. Then
  $\partial_{\varphi}^j(K_{\vartheta} a)(\varphi) = a^{(j-1)}(\varphi+\vartheta) - a^{(j-1)}(\varphi-\vartheta)$. Because
  $\lVert a^{(j)} \rVert_{\infty} \le 1$ for all $1 \le j \le k$,
  it follows that $\partial_{\varphi}(K_{\vartheta} a)$ is Lipschitz continuous
  with Lipschitz constant at most 2. Hence $K_{\vartheta}(B_{1})$ is a precompact set.
\end{proof}

\begin{lemma} \label{lem:sine_inequality}
  Let $0 < \vartheta < \pi$ and $j \in \mathbb{Z} \setminus \{-1,0,1\}$. Then $\sin(j\vartheta) \neq j\sin(\vartheta)$.
\end{lemma}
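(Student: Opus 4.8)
The plan is to argue by contradiction, reducing the statement to the equality case of the elementary bound $|\sin(n\vartheta)| \le n\,|\sin\vartheta|$. First I would observe that it suffices to treat $j \ge 2$: since $\sin$ is odd, the identity $\sin(j\vartheta) = j\sin\vartheta$ holds for an integer $j$ if and only if it holds for $-j$, so the cases $j \le -2$ follow from the cases $j \ge 2$.

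Next, set $a_n := |\sin(n\vartheta)|$ for integers $n \ge 0$. Applying the angle-addition formula $\sin((m+n)\vartheta) = \sin(m\vartheta)\cos(n\vartheta) + \cos(m\vartheta)\sin(n\vartheta)$ together with $|\cos|\le 1$ and the triangle inequality gives the subadditivity estimate $a_{m+n} \le a_m + a_n$, and hence $a_j \le j\,a_1$ for all $j \ge 1$. Now suppose, towards a contradiction, that $\sin(j\vartheta) = j\sin\vartheta$ for some $j \ge 2$. Since $0 < \vartheta < \pi$ we have $a_1 = \sin\vartheta > 0$, and passing to absolute values yields $a_j = j\,a_1$; therefore equality must hold throughout the chain $a_j \le a_2 + a_{j-2} \le a_2 + (j-2)a_1 \le 2a_1 + (j-2)a_1 = j\,a_1$, which forces $a_2 = 2a_1$ (for $j=2$ the chain is vacuous but $a_2 = 2a_1$ is immediate anyway). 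On the other hand $a_2 = |\sin(2\vartheta)| = 2|\sin\vartheta|\,|\cos\vartheta| = 2a_1|\cos\vartheta|$, so $|\cos\vartheta| = 1$ and thus $\sin\vartheta = 0$, contradicting $a_1 > 0$.

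I do not expect a genuine obstacle here; the point to get right is to phrase everything in terms of $|\sin|$ --- so that the crude subadditivity estimate is available and its equality case pins down $|\cos\vartheta|$ --- rather than attempting a direct analysis of the function $\vartheta \mapsto \sin(j\vartheta) - j\sin\vartheta$, whose critical points depend on $j$ in a way that is awkward to control uniformly. One should also not forget the sign reduction for negative $j$ and the degenerate case $j = 2$, both of which are handled above.
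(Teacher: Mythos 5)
Your proof is correct, and it takes a genuinely different route from the one in the paper. The paper analyzes the equation $\sin(s) = s\sin(\vartheta)/\vartheta$ directly: for $0 < \vartheta \le \pi/2$ it uses the strict concavity of the sine on $[0,\pi]$ to show the only solutions in that interval are $s=0$ and $s=\vartheta$, and the bound $\sin(\vartheta)/\vartheta \ge 2/\pi$ to exclude solutions with $s > \pi$; the range $\pi/2 < \vartheta < \pi$ is then reduced to the first case by the substitution $\psi = \pi - \vartheta$, with a separate parity case distinction on $j$. Your argument replaces all of this by the subadditivity of $a_n = \lvert\sin(n\vartheta)\rvert$ and its equality case: forcing equality through the chain $a_j \le a_2 + a_{j-2} \le a_2 + (j-2)a_1 \le j a_1$ yields $a_2 = 2a_1$, and then $a_2 = 2a_1\lvert\cos\vartheta\rvert$ gives $\lvert\cos\vartheta\rvert = 1$ and the contradiction $\sin\vartheta = 0$. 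Each step checks out (the reduction to $j\ge 2$ by oddness, the induction $a_n \le n a_1$, and the passage to absolute values, which is harmless since $j\sin\vartheta > 0$). What your approach buys is brevity and the absence of case distinctions; it also proves slightly more, since you only use $\sin\vartheta \neq 0$, so the conclusion holds for every real $\vartheta$ that is not an integer multiple of $\pi$, whereas the paper's concavity argument is tied to the interval $(0,\pi)$. What it gives up is the extra information the paper's proof extracts along the way, namely a complete description of all real solutions $s$ of $\sin(s) = s\sin(\vartheta)/\vartheta$, though that is not needed for the lemma.
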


\begin{proof}
  Assume first that $0 < \vartheta \le \pi/2$. We show that in this case the equation
  $\sin(s) = s \sin(\vartheta)/ \vartheta$ has the only solutions $s = 0$ and $s = \pm \vartheta$. First note that
  the strict concavity of the sine function on the interval $[0,\pi]$ implies that on this interval we only have two
  solutions, namely $0$ and $\vartheta$. Moreover, the concavity of the sine implies that
  $\sin(\vartheta)/\vartheta \ge \sin(\pi/2)/(\pi/2) = 2/\pi$, and therefore $\pi\sin(\vartheta)/\vartheta \ge 2$.
  This, however, implies that the equation $\sin(s) = s\sin(\vartheta)/\vartheta$ cannot have any solutions for
  $s > \pi$, as the right hand side is strictly larger than $2$. The fact that $-\vartheta$ is the only negative
  solution follows by symmetry. In particular, setting $s = j\vartheta$, this proves the assertion in the case
  $0 < \vartheta \le \pi/2$.
  
  Now assume that $\pi/2 < \vartheta < \pi$ and let $\psi := \pi - \vartheta$. Then
  \begin{equation*}
    \sin(\vartheta) = \sin(\pi-\psi) = \sin(\psi)\;.
  \end{equation*}
  Now, if $j$ is odd, then
  \begin{equation*}
    \sin(j\vartheta) = \sin(j\pi - j\psi) = \sin(\pi-j\psi) = \sin(j\psi)\;.
  \end{equation*}
  Thus $j\sin(\vartheta) = \sin(j\vartheta)$, if and only if $j\sin(\psi) = \sin(j\psi)$. Because $0 < \psi < \pi/2$,
  the first part of the proof can be applied, showing that $j\sin(\vartheta) \neq \sin(j\vartheta)$ unless $j = \pm 1$.
  
  On the other hand, if $j$ is even, we have
  \begin{equation*}
    \sin(j\vartheta) = \sin(j\pi - j\psi) = \sin(-j\psi) = -\sin(j\psi)\;.
  \end{equation*}
  Thus, $j\sin(\vartheta) = \sin(j\vartheta)$, if and only if $j\sin(\psi) = -\sin(j\psi)$. Now note that the equation
  $-\sin(s) = s\sin(\psi)/\psi$ has only the trivial solution $s = 0$, because $\sin(\psi)/\psi \ge 2/\pi$ and the left
  hand side is negative for $0 < s < \pi$. As a consequence, the equation $j\sin(\psi) = -\sin(j\psi)$ only holds for
  $j = 0$, which concludes the proof.
\end{proof}

\subsection{Proof of Lemma~\ref{FormelKreis}}

\begin{proof}
  Let $\gamma\in C^{k+2}(S^1;\mathbb{R}^2)$ be the constant speed parameterized unit circle. Then
  \begin{align*}
    \gamma(\varphi)					&= \big( \! \cos(\varphi),\sin(\varphi) \big) \,,
    &
    \dot{\gamma}(\varphi)			&= \big( \!-\sin(\varphi),\cos(\varphi)\big)\,,\\
    \gamma(\varphi)^{\bot}			&= \big( \!\sin(\varphi),-\cos(\varphi)\big)\,,
    &
    \dot{\gamma}(\varphi)^{\bot}	&= \big( \!\cos(\varphi),\sin(\varphi)\big)\,.
  \end{align*}
  Because $\gamma$ is the unit circle, there exists $\vartheta \in S^{1}$ such that
  \begin{equation*}
    p_{\gamma}(\varphi) = \varphi+\vartheta \,, \qquad \qquad m_{\gamma}(\varphi) = \varphi-\vartheta\;.
  \end{equation*}
  Obviously, all assumptions of Lemma~\ref{Variation_general} are satisfied. It remains to calculate all the
  terms that appear in the expression of the Fr\'echet derivative in Lemma~\ref{Variation_general} for the special
  case of the unit circle. In particular, we obtain
  \begin{equation*}
    \begin{aligned}
      r^2 = \lVert \gamma(p_{\gamma})-\gamma \rVert^2					&= 2\big(1-\cos(\vartheta)\big)\,,\\
      \langle \gamma(p_{\gamma})-\gamma,\gamma(m_{\gamma})-\gamma \rangle		&=-2\cos(\vartheta)\big(1-\cos(\vartheta)\big)\,,\\
      \langle \dot{\gamma}(p_{\gamma}),\gamma(m_{\gamma})-\gamma \rangle		&= \sin(\vartheta)\big(1-2\cos(\vartheta)\big)\,,\\
      \langle \dot{\gamma}(p_{\gamma})^{\bot},\gamma(p_{\gamma})-\gamma \rangle	&= 1-\cos(\vartheta)\,,\\
      \langle \dot{\gamma}(p_{\gamma}),\gamma(p_{\gamma})-\gamma \rangle		&= \sin(\vartheta)\,,\\
      \langle \dot{\gamma},\gamma(p_{\gamma})-\gamma \rangle			&= \sin(\vartheta)\,,\\
      \langle \dot{\gamma}^{\bot},\gamma(p_{\gamma})-\gamma \rangle	&= \cos(\vartheta)-1\,,\\
      \langle \dot{\gamma}(p_{\gamma})^{\bot},\gamma(m_{\gamma})-\gamma \rangle	&= \cos^2(\vartheta)-\cos(\vartheta)-\sin^2(\vartheta)\,,\\
      \langle \dot{\gamma}^{\bot},\gamma(m_{\gamma})-\gamma \rangle	&= \cos(\vartheta)-1\,,\\
      \langle \dot{\gamma}(m_{\gamma}),\gamma(p_{\gamma})-\gamma \rangle		&= -\sin(\vartheta)\big(1-2\cos(\vartheta)\big)\,,\\
      \langle \dot{\gamma}(m_{\gamma})^{\bot},\gamma(m_{\gamma})-\gamma \rangle	&= 1-\cos(\vartheta)\,,\\
      \langle \dot{\gamma}(m_{\gamma}),\gamma(m_{\gamma})-\gamma \rangle		&= -\sin(\vartheta)\,,\\
      \langle \dot{\gamma},\gamma(m_{\gamma})-\gamma \rangle			&= -\sin(\vartheta)\,,\\
      \langle \dot{\gamma}(m_{\gamma})^{\bot},\gamma(p_{\gamma})-\gamma \rangle	&= \cos^2(\vartheta)-\cos(\vartheta)-\sin^2(\vartheta)\,,\\
      \langle \dot{\gamma},\gamma(p_{\gamma})-\gamma(m_{\gamma}) \rangle		&= 2\sin(\vartheta)\;.
    \end{aligned}
  \end{equation*}
  In the following we treat the derivative in direction $a \dot{\gamma}^{\bot}$ and $b \dot{\gamma}$ separately.
  For the derivative in normal direction $a \dot{\gamma}^{\bot}$ we get
  \begin{align*}
    &2I'_{r}[\gamma](a\dot{\gamma}^{\bot})
    =
    2 \int_{m_{\gamma}}^{p_{\gamma}} a(\psi) d\psi - a \langle \dot{\gamma}^{\bot},\gamma(p_{\gamma})^{\bot}-\gamma(m_{\gamma})^{\bot} \rangle\\
    & \qquad
    - a(p_{\gamma}) \langle \gamma(p_{\gamma})-\gamma,\dot{\gamma}(p_{\gamma}) \rangle
    + a(m_{\gamma}) \langle \gamma(m_{\gamma})-\gamma,\dot{\gamma}(m_{\gamma})\rangle  \\
    & \qquad
    + \langle \gamma(p_{\gamma})-\gamma,\dot{\gamma}(p_{\gamma})^{\bot} \rangle
    \frac{\langle a\dot{\gamma}^{\bot}-a(p_{\gamma})\dot{\gamma}(p_{\gamma})^{\bot} ,\gamma(p_{\gamma})-\gamma \rangle}
         {\langle \dot{\gamma}(p_{\gamma}),\gamma(p_{\gamma})-\gamma \rangle}\\
         & \qquad
         - \langle \gamma(m_{\gamma})-\gamma,\dot{\gamma}(m_{\gamma})^{\bot} \rangle
         \frac{\langle a\dot{\gamma}^{\bot}-a(m_{\gamma})\dot{\gamma}(m_{\gamma})^{\bot},\gamma(m_{\gamma})-\gamma \rangle}
              {\langle \dot{\gamma}(m_{\gamma}),\gamma(m_{\gamma})-\gamma \rangle}\\
              & \qquad
              - \frac{r^2}{\sqrt{r^4 - \langle \gamma(p_{\gamma})-\gamma,\gamma(m_{\gamma})-\gamma \rangle^2}}\cdot\\
              & \qquad\quad \cdot
              \bigg( \!
              \frac{\langle a\dot{\gamma}^{\bot}-a(p_{\gamma})\dot{\gamma}(p_{\gamma})^{\bot},\gamma(p_{\gamma})-\gamma \rangle}
                   {\langle \dot{\gamma}(p_{\gamma}),\gamma(p_{\gamma})-\gamma \rangle}
                   \langle \dot{\gamma}(p_{\gamma}),\gamma(m_{\gamma})-\gamma \rangle \\
                   & \qquad \qquad
                   + \langle a(p_{\gamma})\dot{\gamma}(p_{\gamma})^{\bot}-a\dot{\gamma}^{\bot},\gamma(m_{\gamma})-\gamma \rangle
                   + \langle \gamma(p_{\gamma})-\gamma,a(m_{\gamma})\dot{\gamma}(m_{\gamma})^{\bot}-a\dot{\gamma}^{\bot} \rangle\\
                   & \qquad \qquad
                   + \frac{\langle a\dot{\gamma}^{\bot}-a(m_{\gamma})\dot{\gamma}(m_{\gamma})^{\bot},\gamma(m_{\gamma})-\gamma \rangle}
                   {\langle \dot{\gamma}(m_{\gamma}),\gamma(m_{\gamma})-\gamma \rangle}
                   \langle \gamma(p_{\gamma})-\gamma,\dot{\gamma}(m_{\gamma})\rangle
                   \! \bigg)\;.\\
  \end{align*}
  Inserting the expressions for $\gamma$ equal to the unit circle that have been calculated previously, we obtain
  \begin{align*}
    &2I'_{r}[\gamma](a\dot{\gamma}^{\bot})\\
    &= 2 \int_{\varphi-\vartheta}^{\varphi+\vartheta} a(\psi) d\psi \\
    & \qquad
    - \sin(\vartheta) \big( a(p_{\gamma}) + 2a + a(m_{\gamma}) \big)
    - \frac{(\cos(\vartheta)-1)^2 }{\sin(\vartheta)} \big( a(p_{\gamma}) + 2a + a(m_{\gamma}) \big)\\
    & \qquad
    -\frac{1}{\sin(\vartheta)}
    \Big( \!
    \big( a + a(p_{\gamma}) \big) \big( \! \cos(\vartheta)-1 \big) \big( 1-2\cos(\vartheta) \big)-2a(\cos(\vartheta)-1)\\
    & \qquad \qquad
    + \big( a(p_{\gamma}) + a(m_{\gamma}) \big) \big( \! \cos^2(\vartheta)-\cos(\vartheta)-\sin^2(\vartheta) \big)\\
    & \qquad \qquad
    + \big( a + a(m_{\gamma}) \big) \big( \! \cos(\vartheta)-1 \big) \big( 1-2\cos(\vartheta) \big)
    \! \Big)\\
    &= 2 \int_{\varphi-\vartheta}^{\varphi+\vartheta} a(\psi) d\psi \\
    & \qquad
    + \frac{\cos(\vartheta)-1}{\sin(\vartheta)} \big( 2a(p_{\gamma}) + 4a + 2a(m_{\gamma}) - 2a(p_{\gamma}) + 4\cos(\vartheta)a - 2a(m_{\gamma}) \big)\\
    &= 2 \int_{\varphi-\vartheta}^{\varphi+\vartheta} a(\psi) d\psi
    + 4a \frac{(\cos(\vartheta)-1)(\cos(\vartheta)+1)}{\sin(\vartheta)}\\
    &= 2 \int_{\varphi-\vartheta}^{\varphi+\vartheta} a(\psi) d\psi - 4a\sin(\vartheta) \; .
  \end{align*}
  Inserting
  the formulas above in the expression for $I'_{r}[\gamma](b \dot{\gamma})$ yields
  \begin{align*}
    &2 I'_{r}[\gamma](b\dot{\gamma})
    =
    2 \int_{m_{\gamma}}^{p_{\gamma}} b(\psi) \langle \dot{\gamma}(\psi),\dot{\gamma}(\psi)^{\bot} \rangle d\psi \\
    & \quad
    - \langle b \dot{\gamma},\gamma(p_{\gamma})^{\bot}-\gamma(m_{\gamma})^{\bot} \rangle
    + \langle \gamma(p_{\gamma})-\gamma,b(p_{\gamma})\dot{\gamma}(p_{\gamma})^{\bot} \rangle\\
    & \quad
    - \langle \gamma(m_{\gamma})-\gamma,b(m_{\gamma})\dot{\gamma}(m_{\gamma})^{\bot} \rangle
    + \langle \gamma(p_{\gamma})-\gamma,\dot{\gamma}(p_{\gamma})^{\bot} \rangle
    \frac{\langle b\dot{\gamma}-b(p_{\gamma})\dot{\gamma}(p_{\gamma}),\gamma(p_{\gamma})-\gamma \rangle}
	 {\langle \dot{\gamma}(p_{\gamma}) ,\gamma(p_{\gamma})-\gamma \rangle}\\
    & \quad
	 - \langle \gamma(m_{\gamma})-\gamma,\dot{\gamma}(m_{\gamma})^{\bot} \rangle
	 \frac{\langle b\dot{\gamma}-b(m_{\gamma})\dot{\gamma}(m_{\gamma}),\gamma(m_{\gamma})-\gamma \rangle}
	      {\langle \dot{\gamma}(m_{\gamma}) ,\gamma(m_{\gamma})-\gamma \rangle}\\
	      & \quad
	      - \frac{r^2}{\sqrt{r^4 - \langle \gamma(p_{\gamma})-\gamma,\gamma(m_{\gamma})-\gamma \rangle^2}}\cdot\\
              & \qquad\quad
	      \cdot\bigg( \!
	      \frac{\langle b\dot{\gamma}-b(p_{\gamma})\dot{\gamma}(p_{\gamma}),\gamma(p_{\gamma})-\gamma \rangle}
		   {\langle \dot{\gamma}(p_{\gamma}),\gamma(p_{\gamma})-\gamma \rangle}
		   \langle \dot{\gamma}(p_{\gamma}),\gamma(m_{\gamma})-\gamma \rangle \\
		   & \qquad \qquad
		   + \langle b(p_{\gamma})\dot{\gamma}(p_{\gamma})-b\dot{\gamma},\gamma(m_{\gamma})-\gamma \rangle
		   + \langle \gamma(p_{\gamma})-\gamma,b(m_{\gamma})\dot{\gamma}(m_{\gamma})-b\dot{\gamma} \rangle\\
		   & \qquad \qquad
		   + \frac{\langle b\dot{\gamma}-b(m_{\gamma})\dot{\gamma}(m_{\gamma}),\gamma(m_{\gamma})-\gamma \rangle}
		   {\langle \dot{\gamma}(m_{\gamma}),\gamma(m_{\gamma})-\gamma \rangle}
		   \langle \gamma(p_{\gamma})-\gamma,\dot{\gamma}(m_{\gamma}) \rangle
		   \! \bigg)\\
		   &= 0 - 0 + \big( 1-\cos(\vartheta) \big) \big( b(p_{\gamma})-b(m_{\gamma})+b-b(p_{\gamma})-b+b(m_{\gamma}) \big)\\
		   & \quad -\Big( \! \big( 1-2\cos(\vartheta)\big) \big( b-b(p_{\gamma})+b(p_{\gamma})-b(m_{\gamma})+b(m_{\gamma})-b \big) +(b-b) \! \Big)\\
		   &= 0 \; .
  \end{align*}
  Therefore,
  \begin{equation*}
    I'_{r}[\gamma](a\dot{\gamma}^{\bot} + b\dot{\gamma}) = \chi_{[-\vartheta,\vartheta]} \ast a - 2\sin(\vartheta) a\;.
    \qedhere
  \end{equation*}
\end{proof}

\bibliographystyle{plain}

\end{document}